\definecolor{marin}{rgb}   {0.,   0.3,   0.7} 
\definecolor{rouge}{rgb}   {0.8,   0.,   0.} 
\definecolor{sepia}{rgb}   {0.8,   0.5,   0.} 
\newtheorem{df}{Definition}[section]
\newtheorem{lm}[df]{Lemma}
\newtheorem{pr}[df]{Proposition}
\newtheorem{Th}[df]{Theorem}
\newtheorem{rem}[df]{Remark}
\newtheorem*{hyp1}{Hypothesis (i)}
\newtheorem*{hyp2}{Hypothesis (ii)}
\newtheorem*{hyp3}{Hypothesis (iii)}
\newcommand{\e}{h}
\newcommand{\R}{\mathbb{R}}
\newcommand{\To}{\mathbb{T}}
\newcommand{\Tc}{\mathcal{T}}
\renewcommand{\d}{\mathrm{d}}
\newcommand{\conv}{*}
\newcommand{\cO}{\mathcal{O}}
\newcommand{\N}{\mathbb{N}}
\newcommand{\SNorm}[2]{|#1|\left.\vphantom{T_{j_0}^0}\!\!\right._{#2}}          
\author{Anne Bouillard}
\address{Ecole Normale Sup\'erieure, 45 rue d'Ulm, 75230 Paris Cedex 05, France.}
\email{anne.bouillard@ens.fr}
\author{Erwan Faou}
\address{INRIA \& Ecole Normale Sup\'erieure de Cachan Bretagne, 
Avenue Robert Schumann 35170 Bruz, France } 
\email{Erwan.Faou@inria.fr}
\author{Maxime Zavidovique}
\address{Institut de Math\'ematiques de Jussieu, 
Universit\'e Pierre et Marie Curie, Case 247 
4, place Jussieu, 
75252 Paris Cedex 05, France}
\email{Maxime.Zavidovique@upmc.fr }
\title[Fast weak--KAM integrators]{
Fast weak--KAM integrators for separable Hamiltonian systems 
}
\begin{document}

\begin{abstract}
We consider a numerical scheme for Hamilton--Jacobi equations based on a direct discretization of the Lax--Oleinik semi--group. We prove that this method is convergent with respect to the time and space stepsizes provided the solution is Lipschitz, and give an error estimate. Moreover, we prove that the numerical scheme is a {\em geometric integrator} satisfying a discrete weak--KAM theorem which allows to control its long time behavior. Taking advantage of a fast algorithm for computing min--plus convolutions based on the decomposition of the function into concave and convex parts, we show that the numerical scheme can be implemented in a very efficient way.  

\end{abstract}

\subjclass{ 35F21, 65M12, 06F05 }
\keywords{Hamilton--Jacobi equations, weak--KAM theorem, Geometric integration, Min--plus convolution}
\thanks{
}
\maketitle


\section{Introduction}

We consider Hamilton--Jacobi equations of the form
\begin{equation}
\label{eq:hjb}
\partial_t u + H(t,x,\nabla u) = 0, \quad u(0,x) = u_0(x), 
\end{equation}
where $H(t,x,v)$ is a Hamiltonian function   $H: \R \times \R^n \times \R^n \to \R$ that 
is {\bf separable}, in the sense that we can write
\begin{equation}
\label{eq:HKV}
H(t,x,p) = K(p) + V(t,x), 
\end{equation}
for some convex function $K$ and some smooth and bounded function $V$.   The typical cases of study we have in mind  are the so called mechanical Hamiltonians, of the form
\begin{equation}
\label{eq:HPV}
H(t,x,p) = \frac{1}{2}\SNorm{p + P}{}^2 + V(t,x)
\end{equation}
where $P \in \R^n$ is a given vector,  $\SNorm{v}{}^2 = v_1^2 + \cdots + v_n^2$ for $v = (v_1,\cdots, v_n) \in \R^n$, and where $V(t,x)$ is a suitably smooth and bounded function. 

Since the pioneering works of Crandall and Lions \cite{Crandall} and Souganidis \cite{souganidis}, the study of numerical schemes for the Hamilton--Jacobi equation \eqref{eq:hjb} has known many recent progresses, see for instance \cite{lionssouganidis, lintadmor, abgrall, jinxin, barlesjakobsen} and the references therein, and more specifically  
 \cite{oshersethian, oshershu, jiangpeng} for the popular (weighted) essentially nonoscillatory (ENO and WENO) methods which are now widely commonly used in many application fields.  Let us finally mention a recent work by Soga (\cite{soga}) which deals with situations similar to the ones tackled in this paper.

Following a different approach, more in the spirit of \cite{falcone, rorro} (or \cite{akian} in an optimal control setting), the main aim of this paper is to show how a direct discretization of the Lax--Oleinik representation of the viscosity solution of \eqref{eq:hjb} allows to define a new fast algorithm for computing $u(t,x)$ possessing strong geometrical properties allowing to control its long time behavior and obtain error estimates when the solution is Lipschitz.

Let us recall, see \cite{Lions,Fathi}, that under some assumptions on $H$ (smoothness, uniform superlinearity and  strict convexity over the fibers, see  Section \ref{S2} below), we can write
\begin{equation}
\label{eq:E1}
u(t,x) = \inf_{\gamma(t) = x} u_0\big(\gamma(0)\big)+\int_{0}^{t} L\big(s,\gamma(s),\dot{\gamma}(s)\big)\d s, 
\end{equation}
where the infimum is taken over all absolutely continuous curves $\gamma: [0,t] \to \R^n$ such that $\gamma(t) = x$, and where $L(t,x,v)$ is the Lagrangian associated with $H$. The idea of this paper is to discretize directly \eqref{eq:E1} on a space--time grid, by replacing the set of curves $\gamma$ by the set of piecewise linear  (or piecewise constant)  curves across the space grid points.

We first prove that such an approximation is convergent with respect to the size of the space and time stepsizes, and under an anti--CFL condition (namely that the ratio between the space and time stepsize should be small). We give an error estimate under the assumption that $u_0$ is Lipschitz. 

Moreover, this numerical integrator turns out to be a {\em geometric integrator} (see for instance \cite{hairer06gni,leimreich}) in the sense that it respects the long time behavior of the exact solution $u(t,x)$. Let us recall that in the case of periodic  Hamiltonians (both in time and space variables), the weak--KAM theorem (see \cite{Fathi,cis}) shows the existence of a constant $\overline H$ such that 
$$
\frac{1}{t}u(t,x) \to \overline H \quad \mbox{when}\quad t \to +\infty.
$$
Here, using a discrete weak--KAM theorem, see \cite{Be,Za}, we prove that the numerical scheme possesses the same long time property, with a constant that is close to the exact constant $\overline H$. 

Finally, we show that in the separable case mainly considered in this paper (see \eqref{eq:HKV} below), the discrete version of \eqref{eq:E1} is a min--plus convolution that can be approximated using a fast algorithm with $\mathcal{O}(N)$ operations in many situations if $N$ is the number of grid points. This algorithm uses the decomposition of $u$ into concave and convex parts. Moreover, it easily extends to any space dimension $n$ using a splitting strategy, when the kinetic part of the Hamiltonian is separable  -- see Remark \ref{rk28} -- which includes the case \eqref{eq:HPV}. 

We then conclude by numerical simulations in dimension 1 to illustrate the good behavior of our algorithm, as well as its very low cost in general situations.

The paper is divided into three parts: in a first part (Section \ref{S2}) we give a convergence result over a finite time interval of the form $[0,T]$ where $T$ is fixed. In a second part (Section \ref{S3}), we consider the case where the Hamiltonian is periodic in time $t$ and $x$. In this case, we can derive explicitly the dependence on $T$ in the error estimates, and prove a weak--KAM theorem for the numerical scheme which gives informations concerning the long time behavior of the scheme. In the third part (Section \ref{S4}), we describe the implementation of the method based on a fast algorithm to compute min--plus convolutions. We conclude this part by showing numerical simulations.

\subsection*{Acknowledgement} This work owes a lot to Vincent Calvez, who put the authors in touch and took part to preliminary discussions. It is a great pleasure to thank him a lot. We also would like to thank Vinh Nguyen for careful reading through previous versions of the paper. The last author would like to thank Antonio Siconolfi for bringing him to this subject.  Finally, we thank the anonymous referees for very helpful remarks on improving the content and presentation of this manuscript.

\section{Description of the scheme and convergence results\label{S2}}

\subsection{Hypotheses}

Recall that we consider a  separable Hamiltonian $H(t,x,p)$  of the form \eqref{eq:HKV}. 
With this Hamiltonian we can associate by Legendre transform the Lagrangian 
$$
L(t,x,v) = \sup_{p \in \R^n} \Big(p \cdot v - H(t,x,p)\Big), 
$$ 
and we compute that in our case, 
$$
L(t,x,v) = K^*(v) - V(t,x),
$$ 
where $K^*(v)$ is the Legendre transform of $K$. For instance in the special case \eqref{eq:HPV} we have 
$$
L(t,x,v) = \frac{1}{2}\SNorm{v }{}^2 - P \cdot v - V(t,x). 
$$
We make the following assumptions on $K$ and $V$: 

\begin{hyp1}
The function $K^* \in \mathcal{C}^2 (\R^n)$ is uniformly strictly convex in the sense that there exists a constant $c>0$ such that for all $Y \in \R^n$, and for all $v \in  \R^n$, 
\begin{equation}
\label{eq:conv}
\frac{\partial^2 K^*}{\partial v^2}(v)(Y,Y) \geqslant c \SNorm{Y}{}^2.
\end{equation}
\end{hyp1}

\begin{hyp2}
The function $V(t,x) \in \mathcal{C}^2(\R \times \R^n)$ is such that there exists a constant $B$ such that for  $j+ q \leqslant 2$, and all $(t,x) \in \R \times  \R^n$, 
\begin{equation}
\label{eq:hyp1}
|\partial_t^j\partial_x^q  V(t,x)|\leqslant B,
\end{equation}
where $|\cdot|$ denote the norm of differential operators acting on $\R \times \R^n$. 
\end{hyp2}

Note that the bound \eqref{eq:hyp1} is straightforward under the additional assumption that $V(t,x,v)$ is {\em periodic} in $(t,x)$, the case studied in the next section. 

\begin{rem}\rm
The previous hypotheses imply that the Hamiltonian $H$ and the Lagrangian $L$ are $\mathcal C^2$, convex  and superlinear in respectively $p$ and $v$:
\begin{equation}
\label{eq:suplin}
\forall\,  k>0, \quad  \forall\, t >0, \quad \exists \, A(k) < \infty, \quad L(t,x,v)\geqslant k|v|-A(k).
\end{equation}

\end{rem}


Under these assumptions, the viscosity solution of \eqref{eq:hjb} can be represented by the formula: for all $t,\delta > 0$, 
\begin{equation}
\label{eq:laxol}
\forall x\in \R^{n}, \quad u(t+ \delta,x) =\inf_{\gamma(t+ \delta) = x} u\big(t,\gamma(t)\big)+\int_{t}^{t + \delta} L\big(s,\gamma(s),\dot{\gamma}(s)\big)\d s,
\end{equation}
where the infimum is taken on all absolutely continuous curves $\gamma:(t,t+ \delta) \to \mathbb{R}^n$ verifying $\gamma(t+ \delta)=x$, see \cite{Lions,Fathi}. We will later on use the notation $u(t+ \delta,x) :=  T^{\delta}_tu(x)$. Moreover, the infimum is achieved on a curve $\gamma_{t,x}^\delta(s)$ that is $\mathcal{C}^2$ and satisfies the Euler--Lagrange equation
\begin{equation}
\label{eq:Eulag}
\frac{\d}{\d s} \frac{\partial L}{\partial v}\big(s,\gamma(s),\dot \gamma(s)\big) = \frac{\partial L}{\partial x} \big(s,\gamma(s),\dot \gamma(s)\big).
\end{equation}
The notation $T_t^\delta$ defines the Lax--Oleinik semi--group. In particular, we have $T_{t+\delta}^{\sigma} \circ T_{t}^{\delta} = T_{t}^{\delta + \sigma}$ for non negative $\delta$ and $\sigma$. 
With these assumptions, we have the following Proposition.
\begin{pr}
\label{lm1}
For all $T > 0$, and for all $R > 0$, there exists $M(R,T)$ such that for all $x, y \in \R^n$ satisfying $\SNorm{x - y}{}  \leqslant R$ and for all $t \in \R$, then every solution of the Euler--Lagrange equation \eqref{eq:Eulag} minimizing the action 
\begin{equation}
\label{eq:action}
\int_{t}^{t+T} L\big(s,\gamma(s), \dot \gamma(s)\big) \d s,
\end{equation}
with fixed endpoints
 $\gamma(t) = x$ and $\gamma(t + T) = y$, satisfies 
$\SNorm{\dot \gamma(s)}{}  \leqslant M(R,T)$ for all $s \in [t,t + T]$. 
\end{pr}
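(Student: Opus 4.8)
The plan is to obtain the bound in three stages: first an a priori upper bound on the minimal action, then an averaged bound on the speed valid at a single time, and finally a propagation of this bound to the whole interval via an almost-conservation-of-energy argument. First I would bound the action of the minimizer from above by testing the functional on the affine curve $\sigma(s) = x + \frac{s-t}{T}(y-x)$, which has constant velocity $\dot\sigma = (y-x)/T$ of norm at most $R/T$. Since $L(s,x,v) = K^*(v) - V(s,x)$ with $K^*$ continuous and $V$ bounded by $B$ (hypothesis (ii)), the action of $\sigma$ is bounded by a constant $C_1(R,T)$ depending only on $R$, $T$ and $\max_{\SNorm{v}{}\leqslant R/T} K^*(v)$. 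By minimality, the minimizer $\gamma$ satisfies the same bound. Next, superlinearity \eqref{eq:suplin} with, say, $k=1$ gives $L \geqslant \SNorm{\dot\gamma}{} - A(1)$ pointwise, so integrating yields $\int_t^{t+T}\SNorm{\dot\gamma}{}\,\d s \leqslant C_1(R,T) + A(1)T =: C_2(R,T)$. By the mean value theorem there is at least one time $s_0 \in [t,t+T]$ with $\SNorm{\dot\gamma(s_0)}{} \leqslant C_2(R,T)/T$.

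It remains to upgrade this single-time bound to a uniform one. Here I would pass to the Hamiltonian side via the Legendre transform, setting $p(s) = \partial_v K^*(\dot\gamma(s)) = \partial_v L(s,\gamma(s),\dot\gamma(s))$, so that $(\gamma,p)$ solves Hamilton's equations for $H = K(p) + V(t,x)$ and the energy $E(s) := H(s,\gamma(s),p(s))$ obeys $\frac{\d}{\d s}E(s) = \partial_t V(s,\gamma(s))$, whose modulus is at most $B$. Hence $|E(s) - E(s_0)| \leqslant BT$ for all $s \in [t,t+T]$. Since $\SNorm{\dot\gamma(s_0)}{}$ is controlled, so is $\SNorm{p(s_0)}{}$ by continuity of $\partial_v K^*$, and therefore so is $E(s_0)$; the energy-drift bound then gives a uniform bound on $E(s)$, hence on $K(p(s)) = E(s) - V(s,\gamma(s))$. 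Because $K^*$ is finite and $\mathcal C^2$ on all of $\R^n$, its conjugate $K = (K^*)^*$ is superlinear, so its sublevel sets are bounded: a uniform bound on $K(p(s))$ forces a uniform bound on $\SNorm{p(s)}{}$, and finally on $\SNorm{\dot\gamma(s)}{} = \SNorm{\partial_p K(p(s))}{}$. This produces the desired $M(R,T)$.

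The main obstacle is precisely this last stage — turning the integral velocity bound into a pointwise one — since the averaging argument alone only locates one good time $s_0$. The clean way around it is the energy identity above, which exploits the separable structure \eqref{eq:HKV}: the time-dependence of $H$ enters only through $V$, whose time derivative is bounded by $B$, so the energy drifts by at most $BT$ over the interval. Both ends of the argument rest on the Legendre duality between $K$ and $K^*$: the uniform strict convexity \eqref{eq:conv} and $\mathcal C^2$ regularity of $K^*$ guarantee that $\dot\gamma \mapsto p$ is continuous and that $K$ is superlinear, which is exactly what converts bounds on $K(p)$ back into bounds on $\SNorm{\dot\gamma}{}$. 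One should also record the standard fact that the minimizer, being $\mathcal C^2$ and solving \eqref{eq:Eulag}, lifts to a genuine trajectory of the Hamiltonian flow, so that the energy computation is legitimate.
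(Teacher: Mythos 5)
Your proof is correct, but the decisive step is genuinely different from the paper's. You and the paper share the first two stages verbatim: comparison with the affine curve to bound the minimal action, then superlinearity \eqref{eq:suplin} to bound $\int_t^{t+T}\SNorm{\dot\gamma}{}\,\d s$ and locate one time with controlled speed. Where you diverge is in propagating that single-time bound. The paper stays entirely on the Lagrangian side: from the Euler--Lagrange equation $\frac{\partial^2 K^*}{\partial v^2}(\dot\gamma)\,\ddot\gamma = -\partial_x V$, the uniform convexity \eqref{eq:conv} and $\SNorm{\partial_x V}{}\leqslant B$ give the pointwise acceleration bound $\SNorm{\ddot\gamma(s)}{}\leqslant B/c$, so the velocity varies by at most $TB/c$ over the interval --- one line, no duality. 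You instead pass to the Hamiltonian side and use the energy identity $\frac{\d}{\d s}H(s,\gamma(s),p(s)) = \partial_t V(s,\gamma(s))$, whose right-hand side is bounded by $B$ (hypothesis \textbf{\emph{(ii)}} covers $j=1$, $q=0$), so the energy drifts by at most $BT$; you then need the convex-duality package ($K=(K^*)^*$ finite, superlinear and $\mathcal C^1$ because $K^*$ is finite, uniformly convex and $\mathcal C^2$) to convert energy bounds back into velocity bounds. Both routes exploit the separable structure \eqref{eq:HKV}, but through different derivatives of $V$: the paper needs $\partial_x V$ bounded, you need $\partial_t V$ bounded. The paper's argument is shorter and yields an explicit constant; yours is the classical Tonelli-style energy argument, which is the one that survives in settings where the acceleration cannot be bounded directly (e.g.\ the autonomous Tonelli case, where energy is exactly conserved --- precisely the situation the paper's Remark after Proposition \ref{lm1} delegates to \cite{Fathi,FaMa}). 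One point worth making explicit if you write this up: the passage from the bound on $K(p(s))$ to a bound on $\SNorm{p(s)}{}$ uses only an \emph{upper} bound on $K(p(s))$ together with boundedness of sublevel sets of the superlinear function $K$, and the final step $\dot\gamma = \nabla K(p)$ requires the differentiability of $K$, which follows from strict convexity of $K^*$; both facts are standard but are doing real work in your argument.
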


\begin{proof}
The Euler--Lagrange equation is written 
$$
\frac{\partial^2 K^*}{\partial v^2}\big(\dot\gamma(s)\big) \big(\ddot \gamma(s)\big) = - \frac{\partial V}{\partial x}\big(s,\gamma(s)\big). 
$$
Using the uniform strict convexity of $K^*$ and the fact that $\partial_x V$ is uniformly bounded, there exists a constant $C$ depending only on $T$ and $K$, such that 
\begin{equation}
\label{eq:2deriv}
\forall \, s  \in [t,t + T]\quad\SNorm{\ddot \gamma(s)}{}^2 \leqslant C. 
\end{equation}
This implies that for all $s \in [t,t + T]$, 
\begin{equation}
\label{eq:gam}
| \dot \gamma(s) - \dot \gamma(t) |Ê \leqslant \int_{t}^{t + T} \SNorm{\ddot \gamma(s)}{} \d s  \leqslant T \sqrt{C}.
\end{equation}
Now as $\gamma$ minimizes the action between $t$ and $t + T$, comparing with the trivial curve $t \mapsto x + t(y-x)/T$ from $x$ to $y$, we get
\begin{eqnarray*}
\int_{t}^{t+T} L\big(s,\gamma(s), \dot \gamma(s)\big)  \d s &\leqslant& \int_{t}^{t+T} K^*\big(\frac{y-x}{T}\big) - V\big(s,x + \frac{t}{T}(y-x)\big) \d s\\
&\leqslant& T D(R/T) + TB,
\end{eqnarray*}
where $D(M) = \sup\limits_{|v| \leqslant M}| K^*(v)|$, and $B$ is given by \eqref{eq:hyp1}. 
By superlinearity, we deduce that 
$$
\int_{t}^{t+T} \SNorm{\dot \gamma(s)}{} \d s  \leqslant T D(R/T) + TA(1)+  TB,
$$
where $A(1)$ is given by \eqref{eq:suplin}. 
Using \eqref{eq:gam}, we thus obtain
\begin{eqnarray*}
T |\dot \gamma(t)|  \leqslant \int_t^{t+T}|\dot \gamma(s)| \d s + T^2 \sqrt{C}  \leqslant T D(R/T) + TA(1)+  TB + T^2 \sqrt{C}. 
\end{eqnarray*}
This shows that $|\dot\gamma(t)|$ is bounded, and hence using again \eqref{eq:gam} that $|\dot \gamma(s)|$ is bounded for all $s$, with a constant depending only on $T$, $R$ and the constants appearing in \eqref{eq:conv} and \eqref{eq:hyp1}. 
\end{proof}

\begin{rem}\rm
The previous lemma is one of the main keys in the proof of the convergence of our schemes  (compare with \cite{falcone2,falcone}).   Here, it is established thanks to the particular form of $H$, but it can be noted that it remains valid under other technical assumptions (for example if $H$ is autonomous and Tonelli as established in \cite{Fathi,FaMa},
 or if it is Tonelli and periodic both in the space and in the time variable as proven in \cite{Ma2,cis,itu}). Actually, in these cases, it can be established that $M(R,T)$ only depends on the ratio $R/T$. We will come back on these matters  in Section \ref{S3} and  give a proof of this result in the Appendix. 
Therefore, this section and the  next  would still be valid for general Hamiltonians chosen in these classes, however the convolution techniques of section \ref{S4} would fail.

Finally, a clear consequence of Equation \eqref{eq:2deriv} is that the Euler--Lagrange flow of $L$ is complete.
\end{rem}

\subsection{{A first discrete semi--group}}

 For a given $\e>0$ we define the $\e$-grid $G_{\e}=\e \mathbb{Z}^n$ endowed with the metric induced by the euclidian metric on $\R^{n}$. For a given continuous function $u$, we define $u|_{G_{\e}}: G_{\e} \to \R$ its restriction to the grid $G_{\e}$. 
 
 A first idea to discretize the Lax--Oleinik semi--group is as follows. Given $t,\tau>0$, let us define $c_{t,\e}^\tau:G_{\e}^2\mapsto \R{}$ as follows:
\begin{equation}
\label{eq:cost1}
\forall (x,y)\in G_{\e}^2,\quad  c_{t,\e}^\tau(x,y)=\int_t^{t+\tau} L\Big(s,x+(s-t)\frac{y-x}{\tau},\frac{y-x}{\tau}\Big)\d s.
\end{equation}

%
%

Let us introduce the following discrete Lax--Oleinik semi--group: if $u: G_{\e}\to \mathbb{R}$ is any function, we set 
$$
\forall x\in G_{\e} , \quad T_{{t,\e}}^{\tau} u (x)=\inf_{y\in G_{\e}} u(y)+c_{t,\e}^\tau(y,x).
$$


For a given integer $N$, we may define 
\begin{equation}
\label{eq:compo}
T_{{t,\e}}^{N\tau} u = T_{{t_{N-1},\e}}^{\tau} \circ \cdots \circ T_{t_1,\e}^\tau  \circ T_{{t_0,\e}}^{\tau} u
\end{equation}
the composition of $N$ times the discrete semi--group $T_{{t,\e}}^{\tau}$, where for all $i = 1,\ldots,N-1$, $t_i = t + i \tau$. 



 One of the nice features of this discretization is the following property:
 let $u:\R^{n}\mapsto \R{}$ and $N \geqslant 1$ an integer, $t \in \R$ and $\tau>0$. Then 
 \begin{equation}
 \label{eq:up}
 (T^{N\tau}_t u)|_{G_{\e}}\leqslant T_{t,\e}^{N\tau} (u|_{G_{\e}}).
\end{equation}
Indeed this semi--group consists in taking an infimum over a smaller set of curves, compared to the Lax--Oleinik semi--group. 

\subsection{Fully discrete semi--group}

 The main disadvantage is that   to compute this cost, a quadrature rule in time has to be used. In this subsection, we prove how the Euler approximation of this integral yields a convergent scheme which still  satisfies a weak--KAM theorem similar to Proposition \ref{P:weakKAM} under suitable periodicity assumptions. 

For a given $\e>0$ and $\tau>0$, we define the following cost function: 
\begin{equation}
\label{eq:cost2}
\forall (x,y)\in G_{\e}^2,\quad  \kappa_{t,\e}^{\tau}(y,x)= \tau  L\Big(t,x,\frac{x-y}{\tau}\Big). 
\end{equation}
and we introduce the associated fully discrete Lax--Oleinik semi--group, which acts on any function  $u: G_{\e}\to \mathbb{R}$ as follows:
$$
\forall x\in G_{\e} , \quad \Tc_{{t,\e}}^{\tau} u (x)=\inf_{y\in G_{\e}} u(y)+\kappa_{t,\e}^\tau(y,x).
$$
 Using the explicit expression of $L$, we can rewrite this fully-discrete semi--group as 
\begin{equation}
\label{eq:eqconvol}
\forall x\in G_{\e} , \quad \Tc_{{t,\e}}^{\tau} u (x)=  \inf_{y\in G_{\e}} \Big( u(y)+ \tau K^*\big(\frac{x-y}{\tau}\big) \Big) -\tau V(t,x).  
\end{equation}
involving the (min,plus)--convolution of $u$ and $K^*$. 

\begin{rem}\label{SLlike}\rm
 This scheme is a particular case of the so called Semi--Lagrangian schemes. Indeed, those are of the form
$$U(t+\tau,x)=\inf_{\alpha\in \mathcal A} U(t,x-h\alpha)+\tau L(x,\alpha),$$
where $\mathcal A$ is a given (usually compact) set of controls. Here, we take $\alpha = \frac{x-y}{\tau}$. Note that the main feature of this choice is that it enables $y$ to remain on the grid, whereas standard semi-Lagrangian methods rely on an interpolation procedure at each step. This particular form allows to use the fast convolution techniques of Section \ref{S4}.
\end{rem}

\begin{rem}\rm
We can interpret this scheme as a discretization of the splitting scheme (see for instance \cite{splitting}) with time step $\tau$ based on the decomposition 
$$
\partial_t u(t,x) + K\big(\nabla u(t,x)\big) = 0 ,\quad \mbox{and} \quad \partial_t u(t,x) + V(t,x)  = 0, 
$$
where the first part is integrated using the method described in the previous section.
\end{rem}

\begin{rem}
\label{rk28}
\rm 
In dimension $n \geqslant 1$, if we assume that for $p = (p_1,\ldots,p_n) \in \R^n$, $K(p) = K_1(p_1) + \cdots + K_n(p_n)$ with convex Hamiltonian functions $K_i^*$, $i = 1,\ldots,n$, satisfying all the hypotheses {\it\textbf{{(i)}}}, {\it\textbf{{(ii)}}} on $\R$, then we immediately see that for a given function $u(x) = u(x_1,\ldots,x_n)$, with $x = (x_1,\ldots,x_n) \in \R^n$,  we have 
\begin{multline*}
\inf_{y \in G_{\e}} u(y) + \tau K^*\big(\frac{x- y}{\tau}\big) = \\
\inf_{y_n \in G^n_\e} \left[  \tau K_n^*\big(\frac{x_n - y_n}{\tau}\big) + \Big[ \inf_{y_{n-1} \in G^{n-1}_\e}\tau K_{n-1}^*\big(\frac{x_{n-1} - y_{n-1}}{\tau}\big)  + \cdots  \right.\\
 \left.+  \Big[\inf_{y_1 \in G^1_\e}  \tau K_n^*\big(\frac{x_1 - y_1}{\tau}\big) + u(y_1,\ldots,y_n) \Big] \cdots \Big]\right]=\\
\Tc_{{t,\e}}^{\tau,1}\circ \cdots \circ \Tc_{{t,\e}}^{\tau,n} u (x), 
\end{multline*}
where we have decomposed $G_{\e} = G^1_\e \times \cdots \times G^n_\e$ and where 
$$\forall i \in [1,n], \quad \Tc_{{t,\e}}^{\tau,i} u (x)=\inf_{y_i \in G^i_\e} \tau K_i^*\big(\frac{x_i - y_i}{\tau}\big) + u(x_1,\ldots,x_{i-1},y_i,x_{i+1},\ldots x_n) .$$
  %
  This formula is essentially due to the fact that the Hamiltonians $K_i$ commute, i.e. satisfy $\{ K_i,K_j\} = 0$ for $(i,j) \in \{1,\ldots,n\}^2$ which ensures that the flows of 
$\partial_t u = K_i(\nabla u)$, $ i = 1,\ldots,n$ commute. 
In this case, this allows to reduce the computation of the minimum over the $n$ dimensional grid $G_{\e}$ to $n$ minimization problems over the one--dimensional  grids $G_{\e}^i$. 

\end{rem}

For a given integer $N$,  and a given function $u^0:G_h \to \R$  we define -- compare \eqref{eq:compo}
\begin{equation}
\label{eq:uN}
\forall\, x \in G_h, \quad u^N(x) := \Tc_{{t,\e}}^{N\tau} u(x) = \Tc_{{t_{N-1},\e}}^{\tau} \circ \cdots \circ \Tc_{t_1,\e}^\tau  \circ \Tc_{{t_0,\e}}^{\tau} u^0(x)
\end{equation}
where $t_i = t + i \tau$.   When no confusion is possible, we will also refer to $\Tc^N$ instead of $\Tc_{{t,\e}}^{N\tau}$.   Note that with these notations, an estimate of the form \eqref{eq:up} is no longer valid.
\begin{rem}\rm
The following monotonicity property holds true: if $u(x)\leqslant v(x)$ for all  $x \in G_h$,  we easily observe that $ \Tc^N u \leqslant \Tc^N v$.
%
\end{rem}

Finally, we have the following convergence result:

\begin{Th}\label{approx2}
 Let $T > 0$,  $\tau_0$ and $h_0 > 0$ and $u_0: \R^n \to \R$ a bounded Lipschitz function.  For a given $t_0\in \R$, let $u(t,x) =T^{t}_{t_0} u $ be the viscosity solution \eqref{eq:E1} of the Hamilton--Jacobi equation \eqref{eq:hjb} such that $u(t_0,x) = u_0$. Then there exists a constant $M(T)$ such that for  all $\e>0$ and $\tau>0$ such that $\e < \e_0$, $\tau < \tau_0$ and the bound 
 \begin{equation} \label{eq:anticfl}
 \frac \e \tau < h_0
 \end{equation}
  are satisfied, 
then for all $N$ verifying $N \tau \leqslant T$, 
\begin{equation}\label{eq:conver}
\forall\, x \in G_{\e}, \quad  |u(N\tau,x) - u^N(x)|  \leqslant M(T) \big(\frac{\e}{\tau}+\tau\big). 
\end{equation}
where the discrete solution $u^N:G_{\e} \to \R$ is given by the formula \eqref{eq:uN} with initial value $u^0 := u_0 |_{G_{\e}}$.  
\end{Th}

\begin{rem}\rm
The following estimate is not new and not surprising. First, since the scheme is monotonous, its convergence is known to be almost automatic (see \cite{BaSo}). Moreover, as we deal with  a particular case of Semi--Lagrangian scheme, such results are known (see \cite{falcone2,falcone} and references therein), in particular the convergence in $\cO ( \e/\tau) + \cO ( \tau)$ is common. For the sake of completeness, we will give a proof in the appendix \ref{BB}. 
\end{rem}

\begin{rem}\rm
In the previous theorem, taking $\tau = \sqrt{\e}$ obviously yields a convergence of order $\cO (\sqrt \e)$. Finally, note that in the present setting, the constant $M$  depends on $T$ in an uncontrolled way. However, under some extra periodicity assumptions, this dependance will become linear later in the paper (see Proposition \ref{approx3}).
\end{rem}

%

\section{Long time behavior in the periodic case\label{S3}}

We will now make the additional assumption that the potential function $V(t,x)$ is periodic, namely
\begin{hyp3}
 The function $V$ is $\mathbb{Z}\times \mathbb Z^n$-periodic, in the sense that
$$\forall (t,x)\in \R \times \R^{n},\quad \forall (m,M)\in \mathbb Z \times \mathbb Z^n,\quad V(t,x)=V(t+m,x+M).$$
\end{hyp3}

Note that under this assumption, the estimate \eqref{eq:hyp1} is automatically satisfied since we still assume that $V \in \mathcal{C}^2(\R \times \R^n)$. 

\subsection{Weak--KAM theorem and a priori compactness}

In this  periodic case, the weak--KAM theorem allows to study the long time behavior of the solution of \eqref{eq:hjb} defined by the Lax--Oleinik semi--group: 
\begin{pr}
\label{P:weakKAM}
Assume that the hypotheses $\textbf{{(i)}}$ and $\textbf{{(iii)}}$ are satisfied. 
 Then there exists a unique constant $\overline H$ for which the equation
$$
\forall t>0, \quad T_t^1 u^*(t,\cdot) = \overline H + u^*(t,\cdot),
$$
admits a $\mathbb{Z}\times \mathbb Z^n$--periodic continuous solution: $u^*: \R\times \R^n \to \R$.

Moreover, for any uniformly bounded $u:\R^n\to \R$, there exists a constant $C_u$ such that
 $$
\forall\, t > 0, \quad |T_0^t u - t \overline H  |_\infty  \leqslant C_u. 
$$
\end{pr}

\begin{proof}
 This result is very standard and a complete proof can be found in \cite{Fathi}. 
The existence of $\overline H$ and of $u^*$ is exactly the content of the weak--KAM theorem (see \cite{FaKAM,Fathi} for the autonomous case, and \cite{cis} for the time periodic case). The second assertion is a consequence of the fact that 
$$
\SNorm{T_0^t u  - T_0^tv}{\infty} \leqslant \SNorm{u-v}{\infty}
$$
for all continuous bounded functions $u$ and $v$ on $\R^n$, where $|\cdot |_{\infty}$ denotes the $L^\infty$ norm on $\R^n$. 
\end{proof}

The goal of this section is to prove that a similar result holds for the numerical scheme described in the previous section, under the hypotheses \textbf{{\em (i)}} and \textbf{{\em (iii)}}.


In order to study the long time behavior of the method in this case, we first give an a priori compactness result which refines the estimates given in Proposition \ref{lm1}. 
The following proposition is mainly due to Mather (see \cite{Ma2} for the case of time periodic Lagrangians, or \cite[Lemma 7 and Corollary 8]{itu} for space periodic Lagrangians).
\begin{pr}\label{lm2}
Assume that $H$ satisfies the hypotheses \textbf{{(i)}} and \textbf{{(iii)}}. 
For all $\Gamma>0$, there exists a constant $\Gamma'$ such that for any minimizer of the Lagrangian action $\gamma : [a,b]\to \R^n$ with $b-a\geqslant 1$ and $|\gamma(b)-\gamma(a)|/(b-a)\leqslant \Gamma$ then we have
$$\forall t\in [a,b],\quad |\dot \gamma(t)|\leqslant \Gamma'.$$
\end{pr}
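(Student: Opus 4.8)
The plan is to bound the speed through the energy
$E(s):=\dot\gamma(s)\cdot\nabla K^*(\dot\gamma(s))-K^*(\dot\gamma(s))+V(s,\gamma(s))$,
which along a solution of the Euler--Lagrange equation \eqref{eq:Eulag} satisfies $\frac{\d}{\d s}E(s)=\partial_t V(s,\gamma(s))$, so that $|\frac{\d}{\d s}E(s)|\leqslant B$ by \eqref{eq:hyp1} (which holds automatically under \textbf{\emph{(iii)}}). Since $K$ is superlinear and, by the uniform convexity \eqref{eq:conv}, $\nabla K^*$ is a proper diffeomorphism, the map $v\mapsto v\cdot\nabla K^*(v)-K^*(v)=K(\nabla K^*(v))$ tends to $+\infty$ as $|v|\to\infty$; hence a uniform bound $E(s)\leqslant E_0(\Gamma)$ would immediately give the desired $|\dot\gamma(s)|\leqslant\Gamma'$. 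The whole point is therefore to bound the energy on $[a,b]$ by a constant independent of the (possibly very large) length $b-a$.

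First I would record the soft, window-independent estimates. Comparing the minimizer with the straight line $s\mapsto\gamma(a)+\frac{s-a}{b-a}(\gamma(b)-\gamma(a))$, whose velocity has norm $|\gamma(b)-\gamma(a)|/(b-a)\leqslant\Gamma$, and using that $V$ is bounded, one obtains $\frac1{b-a}\int_a^b K^*(\dot\gamma)\,\d s\leqslant D(\Gamma)+2\|V\|_\infty=:C(\Gamma)$ with $D(\Gamma)=\sup_{|v|\leqslant\Gamma}|K^*(v)|$. By superlinearity \eqref{eq:suplin} this bounds the average speed, and in particular yields at least one time $\bar s\in[a,b]$ at which $|\dot\gamma(\bar s)|$, hence $E(\bar s)$, is controlled by a constant depending only on $\Gamma$. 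Combined with $|\frac{\d}{\d s}E|\leqslant B$, this already bounds $E$, and so the speed, on any time window of bounded length around $\bar s$. Note also that under \textbf{\emph{(iii)}} the estimate \eqref{eq:2deriv} holds with a constant independent of the interval, so the velocity cannot turn fast: a large speed on a subinterval forces a correspondingly large displacement of $\gamma$ there.

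The core of the argument, and the place where the periodicity \textbf{\emph{(iii)}} is essential, is to promote this to a bound valid on all of $[a,b]$, i.e. to exhibit a moderate-energy time in every window of bounded length, or equivalently to rule out that $\gamma$ travels fast over a long stretch. Here I would follow Mather's a priori compactness technique (see \cite{Ma2,itu}): if on some subinterval the velocity were large, the displacement of $\gamma$ would be large there, and using the convexity of $K^*$ together with the $\mathbb Z\times\mathbb Z^n$-periodicity of $V$ one constructs a competing curve with the same endpoints, obtained by decreasing this displacement by a suitable lattice vector and redistributing it, whose action is strictly smaller — contradicting minimality once the speed exceeds a threshold depending only on $\Gamma$. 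I expect this exchange/surgery step to be the main obstacle, since it is the only point where the global constraint $|\gamma(b)-\gamma(a)|/(b-a)\leqslant\Gamma$ and the periodicity must be combined to beat the soft $L^1$ bounds of the previous paragraph, which by themselves do not control the pointwise speed (a Lipschitz velocity with bounded average can still spike on a long interval).

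Finally, the conclusion can be packaged using Proposition \ref{lm1}. Once one knows that $|\gamma(c+1)-\gamma(c)|\leqslant\rho(\Gamma)$ for every unit subinterval $[c,c+1]\subseteq[a,b]$ — which is exactly what the exchange step provides — applying Proposition \ref{lm1} with $R=\rho(\Gamma)$ and $T=1$ on each such window gives $|\dot\gamma(s)|\leqslant M(\rho(\Gamma),1)=:\Gamma'$ for all $s$, with $\Gamma'$ depending only on $\Gamma$. The hypothesis $b-a\geqslant1$ guarantees that every $s\in[a,b]$ lies in such a unit window, which completes the proof.
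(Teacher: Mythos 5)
Your overall architecture is sound and matches the paper's: reduce everything to showing that every unit subinterval of $[a,b]$ has displacement bounded by a constant depending only on $\Gamma$, then convert this local displacement bound into a pointwise velocity bound. Your finishing step (Proposition \ref{lm1} with $T=1$, $R=\rho(\Gamma)$ applied to the restriction of $\gamma$ to each unit window, which is again a minimizer) is correct and is a clean alternative to the paper's appeal to periodicity and completeness of the Euler--Lagrange flow; your energy identity $\frac{\d}{\d s}E(s)=\partial_t V(s,\gamma(s))$ and the soft comparison estimates giving a bounded average speed and one controlled time $\bar s$ are also correct.

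However, there is a genuine gap at exactly the step you yourself flag as ``the main obstacle'': the exchange/surgery argument that rules out a fast stretch is never carried out, only attributed to Mather. This step cannot be deferred, because it \emph{is} the content of the proposition (the statement being proved is precisely Mather's a priori compactness in this setting), and the paper devotes its entire appendix lemma to it. Concretely, the paper (1) establishes two-sided superlinear bounds $C^-\big(|x-y|/t\big)\,|x-y|/t \leqslant A_T^t(x,y)/t \leqslant C^+\big(|x-y|/t\big)\max\big(|x-y|/t,1\big)$ for the minimal action; (2) fixes constants $\Gamma''<\Gamma'''<\Gamma'$ by explicit inequalities involving $C^\pm$; (3) assuming some unit window $[a,a+1]$ has displacement at least $\Gamma'$, deduces that $t$ must be large and runs a pigeonhole argument over windows $[c_i,c_{i+1}]$ of length $2\Gamma'''/\Gamma''$ to find one window $[c,d]$ with moderate average speed $\leqslant\Gamma''$; (4) builds a competitor $\delta$ by replacing the fast piece on $[a,b]$ by a minimizer over the longer interval $[a,b+N]$ with $N=\Gamma'''/\Gamma''\in\N$, time-translating $\gamma$ by the integer $N$ on $[b+N,c+N]$ --- this is where the $\mathbb{Z}$-periodicity in time enters and why $N$ must be an integer --- and compressing the slow window $[c,d]$ into $[c+N,d]$; and (5) checks via the $C^\pm$ bounds that $\delta$ has strictly smaller action, contradicting minimality. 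None of these quantitative choices, the pigeonhole step, or the action comparison appears in your proposal; moreover, your one-line description of the surgery (``decreasing the displacement by a suitable lattice vector'') describes the space-periodic variant of \cite{itu}, a different construction from the time-translation surgery that the time-periodic hypothesis \textbf{\emph{(iii)}} naturally supports, and you indicate how to execute neither. As written, the proposal proves the bookends and assumes the hard middle.
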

In other words, the constant $M(R,T)$ of Proposition \ref{lm1} can be chosen to be an increasing function of $R/T$. 

For the sake of completeness, we will give in appendix a proof of this proposition. Note that most of the proof -- essentially taken from \cite{Ma2} -- does not require  the Hamiltonian to be periodic in space.  
In \cite{itu} a similar result is proven  which requires the Lagrangian to be periodic in space, but not any more in time.

\subsection{Convergence estimates in the periodic case}

Using the previous proposition, we can compute explicitly the time dependence of the  constant $M(T)$ in the error estimates of Theorem  \ref{approx2}, and prove that it depends linearly on the time in the periodic case.   (see the Appendix \ref{BB} for a proof). 

\begin{Th}\label{approx3}
 Let $T_0 > 1$, $\tau_0$ and $h_0 > 0$ and $u_0: \R^n \to \R$ a bounded Lipschitz function.  For a given $t_0\in \R$, let $u(t,x) =T^{t}_{t_0} u $ be the viscosity solution \eqref{eq:E1} of the Hamilton--Jacobi equation \eqref{eq:hjb} such that $u(t_0,x) = u_0$.  y Then, there exists a constant $M$ such that for all $\e>0$ and $\tau>0$ such that $\e < \e_0$, $\tau < \tau_0$ and 
 \begin{equation}
 \frac{ \e}{\tau} < h_0, 
 \end{equation}
and for all $N$ satisfying $N \tau \geqslant T_0$, 
\begin{equation}
 \forall\, x \in G_{\e},\quad |u(N\tau,x) - u^N(x)| \leqslant  M N\tau\big(\frac{\e}{\tau}+\tau\big).
\end{equation}
where the discrete solution $u^N:G_{\e} \to \R$ is given by the formula \eqref{eq:uN} with initial value $u^0 := u_0 |_{G_{\e}}$.  
\end{Th}

\subsection{Discrete weak--KAM theorem and effective Hamiltonian}

Recall that the function $u(t,x)$ is defined on $\mathbb T^1\times \mathbb T^n = \big(\R / \mathbb Z \big)\times \big( \mathbb R^n / \mathbb Z ^n \big)$. For convenience, we will only treat the cases of rational time and space discretizations: We set
$$
\Lambda = \Big\{ \Big( \frac{1}{k},\frac{1}{\ell}\Big)\, | \, (k,\ell) \in \N^* \times \N^*\Big\},  
$$
and in the sequel, we will only consider stepsizes $(\e,\tau) \in \Lambda$. 
We then will denote by $p$ the canonical projection from $\R ^n$ to $\mathbb T^n$, and by $\widetilde G_{\e} = G_{\e}/\mathbb Z^n$ the quotiented grid, where $G_{\e}$ is the grid above, defined on $\R^n$. 

Finally, we define a new cost function: For $(\e,\tau) \in \Lambda$ and $t > 0$, 
$$
\forall \, (\tilde x,\tilde y)\in (\widetilde G_{\e})^2,\quad \tilde \kappa_{t,\e}^{\tau}(\tilde x,\tilde y)=\inf_{\substack{p(x)=\tilde x \\ p(y)=\tilde y}}  \kappa_{t,\e}^{\tau}(x,y), 
$$
where $\kappa_{t,\e}^{\tau}(x,y)$ is the fully discrete cost function defined in \eqref{eq:cost2}.

We then define the following semi--group:  Let $\tilde u : \mathbb T^n \to \R$ and $(\e,\tau) \in \Lambda$, then 
 the fully discrete semi--group is
\begin{equation}\label{eq:los}
\forall\,  \tilde x\in \widetilde G_{\e} , \quad \widetilde \Tc_{{t,\e}}^{\tau} \tilde u (\tilde x)=\inf_{\tilde y\in \widetilde G_{\e}} \tilde u(\tilde y)+\tilde \kappa_{t,\e}^{\tau}(\tilde y,\tilde x).
\end{equation}
As in the previous section, we define 
 $$
 \widetilde \Tc_{{t,\e}}^{N\tau}  =\widetilde \Tc_{{t_{N-1},\e}}^{\tau} \circ \cdots \circ \widetilde \Tc_{t_1,\e}^{\tau}  \circ \widetilde \Tc_{{t_0,\e}}^{\tau},$$
 where $t_i = t + i \tau$. 

\begin{rem}
\label{rk282}
\rm 
We leave to the reader the verification that if $\tilde u : \mathbb T^n \to \R$ is a function and if $u : \R^n \to \R$ is its lift (which is then $\mathbb{Z}^n$ periodic), the  function  $\Tc_{{t,\e}}^{N\tau} u $ is $\mathbb Z^n$ periodic and that the function it canonically induces on $\mathbb T^n$ is  $\widetilde \Tc_{{t,\e}}^{N\tau} \tilde u$. It comes from the fact that two infimums commute. Hence the previous convergence result Theorem \ref{approx3} can be read equivalently on $\To^n$ or on the space of $\mathbb{Z}^n$ periodic functions on $\R^n$. {However, it is easier to take advantage of the compactness of $\To^n$. This is why we introduce these new costs, and define them with infimums.}
\end{rem}
We can use the discrete weak--KAM theorem to better understand the approximate semi--groups applied  for a period $1$ of time and obtain the following proposition.
\begin{pr}
\label{prop:klm}
For any $(\e,\tau) \in \Lambda$,  there exists a  unique constant  $\overline{\mathcal  H}_{\e,\tau}$ such that there exists a function $  v_{\e,\tau}^* : \widetilde G_{\e}\to \R{}$ verifying:
$$
\widetilde \Tc_{{0,\e}}^{1}  v_{\e,\tau}^* = v_{\e,\tau}^* + \overline{\mathcal  H}_{\e,\tau}.
$$
Moreover, in $u$ is any bounded initial datum on $G_{\e}$ at $t = 0$, then we have in $L^\infty$
$$
\frac{1}{N\tau} \Tc_{{0,\e}}^{N\tau} u \longrightarrow  \overline{\mathcal H}_{\e,\tau}, 
$$
as $N \to +\infty$,  where $\Tc_{{0,\e}}^{N\tau}$ is defined in \eqref{eq:uN}.  
\end{pr}
\begin{proof}
The first part is just a reformulation of the discrete weak--KAM theorem (see for example the appendix of \cite{Za} or \cite{Be}) while the second part is -- as in the proof of Proposition \ref{P:weakKAM} --  a direct consequence of the fact that our approximation operators are weakly contracting for the infinity norm on bounded functions.
\end{proof}
\begin{rem}\rm
  Note that the previous proposition can be interpreted in the (min,plus)
  framework: Equation~(\ref{eq:los}) is the (min,plus) product of a
  vector $\tilde{u}$ by the matrix
  $\tilde{\kappa}_{t\, \e}^{\tau}$. Then, for $\tau = 1/\ell$, $\ell \in \N^*$, 
  $\widetilde{\Tc}_{0,\e}^{1}\tilde{u} = \widetilde{\Tc}_{0,\e}^{\ell \tau}\tilde{u}$ is obtained by
  successive matrix multiplications with $\tilde{u}$. Hence there exists a matrix $C_{\e,\tau}$ such
  that $\widetilde{\Tc}_{0,\e}^{1}\tilde{u}(\tilde{x}) =
  \inf\limits_{\tilde{y}\in\tilde{G}_{\e}}\tilde{u}(\tilde{y}) +
    C_{\e,\tau}(\tilde{y},\tilde{x})$, with $C_{\e,\tau}(\tilde{y},\tilde{x})<+\infty$ for
    all $\tilde{y},\tilde{x}$. The matrix $C_{\e,\tau}$ has a unique eigenvalue
    $\overline{\mathcal H}_{\e,\tau}$, and $v^*_{\e,\tau}$ is an eigenvector
    (see \cite{BCOQ} for details).
\end{rem}

Let us recall that $\overline{H}$ is the effective Hamiltonian of $H$. It is obtained in homogenization theory by solving the cell problem (\cite{LPV}) and is also the constant found in the weak--KAM theorem (\ref{P:weakKAM}). Using the refined convergence result obtained in Theorem \ref{approx3}, we can estimate the error between the effective Hamiltonian and the discrete effective Hamiltonian defined in Proposition \ref{prop:klm}. 

\begin{Th}
With the notations of Theorem \ref{approx3}, let $(\e,\tau) \in \Lambda$ be such that 
$\e \leqslant \e_0$, $\tau \leqslant \tau_0$ and $ \e/\tau \leqslant h_0$, then following inequality holds:
$$\left| \overline{\mathcal H}_{\e,\tau} - \overline H \right| \leqslant M\big(\frac{\e}{\tau} + \tau\big),$$
where $M$ is {the}   constant coming from Theorem \ref{approx3}, and where  $\mathcal H_{\e,\tau}$ is defined in Proposition \ref{prop:klm}. 
\end{Th}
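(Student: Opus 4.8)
The plan is to compare the linear growth rates of the exact and discrete semi-groups by evaluating both on the \emph{exact} weak-KAM function, and then to send the number of iterations to infinity so that every bounded remainder — even one depending on $(\e,\tau)$ — washes out.

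First I would fix the exact weak-KAM function $u^*$ provided by Proposition \ref{P:weakKAM} and set $w:=u^*(0,\cdot)$, which is $\mathbb Z^n$-periodic, bounded, and Lipschitz. Using the $1$-periodicity in $t$ of $V$ (hypothesis \textbf{(iii)}), the relation $T_t^1u^*(t,\cdot)=\overline H+u^*(t,\cdot)$ iterates to
\[
T_0^{m}w=w+m\overline H,\qquad m\in\N^*.
\]
On the discrete side, the cost $c_{t,\e}^{\tau}$ is likewise $1$-periodic in $t$, so the period-$1$ operator $\widetilde T_{0,\e}^{1}$ is genuinely iterable; since adding a constant commutes with the semi-group, the fixed point $u_{\e,\tau}^*$ of Proposition \ref{prop:klm} satisfies
\[
\widetilde T_{0,\e}^{m}u_{\e,\tau}^*=u_{\e,\tau}^*+m\,\overline H_{\e,\tau},\qquad m\in\N^*.
\]

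Next I would feed $w$ into the refined estimate. Taking $t=0$ and $N\tau=m$ in Theorem \ref{approx3} (valid once $m>T_0$), and reading the inequality on the torus, gives
\[
\Big|\,m\overline H+w|_{\widetilde G_\e}-\widetilde T_{0,\e}^{m}\big(w|_{\widetilde G_\e}\big)\Big|_\infty\leqslant M\,m\big(\tfrac{\e}{\tau}+\tau\big).
\]
To rewrite the discrete term, I would use that $\widetilde T_{0,\e}^{m}$ is nonexpansive for the sup norm (as noted in the proof of Proposition \ref{prop:klm}): comparing the orbit of $w|_{\widetilde G_\e}$ with that of $u_{\e,\tau}^*$ and invoking the fixed-point identity yields the $m$-independent bound
\[
\Big|\widetilde T_{0,\e}^{m}\big(w|_{\widetilde G_\e}\big)-m\,\overline H_{\e,\tau}\Big|_\infty\leqslant \big|w|_{\widetilde G_\e}-u_{\e,\tau}^*\big|_\infty+\big|u_{\e,\tau}^*\big|_\infty=:D_{\e,\tau}.
\]

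Combining the two displays by the triangle inequality, and observing that $m(\overline H-\overline H_{\e,\tau})$ is a constant while $w$ is bounded by $|w|_\infty$, I reach
\[
m\,\big|\overline H-\overline H_{\e,\tau}\big|\leqslant M\,m\big(\tfrac{\e}{\tau}+\tau\big)+D_{\e,\tau}+|w|_\infty.
\]
Dividing by $m$ and letting $m\to+\infty$ at fixed $(\e,\tau)$, so that $D_{\e,\tau}/m\to 0$, gives the first inequality. The second is obtained verbatim: replace $\widetilde T$ by $\widetilde{\Tc}$, $u_{\e,\tau}^*$ by $v_{\e,\tau}^*$, $\overline H_{\e,\tau}$ by $\overline{\mathcal H}_{\e,\tau}$, and use the fully discrete estimate of Theorem \ref{approx3}. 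There is no real obstacle here; the only delicate points are bookkeeping: verifying that the period-$1$ operator iterates cleanly so the weak-KAM identities survive $m$ steps (which is exactly the time-periodicity of the costs), and recognizing that the possibly large, $(\e,\tau)$-dependent constant $D_{\e,\tau}$ is harmless because it is divided by $m$ before the limit is taken.
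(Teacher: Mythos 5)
Your argument is correct and is essentially the paper's own proof: both feed a bounded Lipschitz datum into the linear-in-time estimate of Theorem \ref{approx3}, divide by the elapsed time $N\tau$, and let it tend to infinity so that every bounded (even $(\e,\tau)$-dependent) remainder vanishes; the paper simply takes an arbitrary such datum and quotes the limits $\frac{1}{N\tau}T_0^{N\tau}u\to\overline H$ and $\frac{1}{N\tau}T_{0,\e}^{N\tau}u\to\overline H_{\e,\tau}$ of Propositions \ref{P:weakKAM} and \ref{prop:klm}, whereas you unfold those limits by hand for the particular datum $w=u^*(0,\cdot)$ via the calibration identities and nonexpansiveness. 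The one point to tighten is your unproved claim that $u^*(0,\cdot)$ is Lipschitz (Proposition \ref{P:weakKAM} states only continuity); this is a standard weak-KAM fact, recoverable from $u^*(0,\cdot)=T_0^1u^*(0,\cdot)-\overline H$ and the regularizing property of the Lax-Oleinik semi-group, but the paper's use of an arbitrary Lipschitz datum sidesteps it.
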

\begin{proof}
Start with a bounded and uniformly Lipschitz continuous function $u :\mathbb{R}^n  \to \R{}$. By Theorem \ref{approx3}, the following inequality holds if $N\tau\geqslant T_0$ for some chosen $T_0 > 1$:  
\begin{equation}\label{vitesse}
\forall\, x \in G_{\e},\quad  |(T^{N\tau}_0 u)(x) - \Tc^N  (u|_{G_{\e}})(x)| \leqslant MN\tau\big(\frac{\e}{\tau}+\tau\big),
\end{equation}
where  $\Tc^N = \Tc_{{0,\e}}^{N\tau}$.   
Dividing by $N\tau$ and letting $N$ go to $\infty$ yields that
$$\left| \overline{ \mathcal H}_{\e,\tau} - \overline H  \right| \leqslant M\big(\frac{\e}{\tau}+\tau\big).$$
\end{proof}

\begin{rem}\rm
\label{rem38}
One may wonder what is the behavior of the quantity $(T^{N\tau}_0 u)|_{G_{\e}}- \Tc^N (u|_{G_{\e}})$ as $T=N\tau\to \infty$. The previous results show that it has a linear growth, of rate $  \overline H - \overline {\mathcal H}_{\e,\tau}  $. Comparing with weak--KAM solutions yields that the second error term is always bounded. However, in some cases more can be said. Indeed, in the autonomous case ($L$ independent of $t$) Fathi proved the convergence of the Lax--Oleinik semi--group (\cite{faconver}), that is, for any initial condition $u$ there exists a weak--KAM solution $u^*$ such that
$(T^{N\tau}_0 u)-N\tau \overline H\to u^*$ uniformly.  Moreover,  it can be proved that the iterated powers of a $(\min,+)$ matrix are periodic after a finite time. Therefore, for $N$ large enough, the sequence $\mathcal \Tc^N (u|_{G_{\e}})$, is periodic after a certain time. In conclusion, in the autonomous case, one can write
$$(T^{N\tau}_0 u)|_{G_{\e}}-  \Tc^N (u|_{G_{\e}})=N\tau( \overline H - \overline{\mathcal  H}_{\e,\tau}) +w_N,$$
where $w_N$ is asymptotic to a periodic sequence.
\end{rem}

\section{Fast (min,plus)--convolution\label{S4}}

As we have seen in \eqref{eq:eqconvol}, the numerical scheme
considered in this paper involves the computation of the
(min,plus)--convolution
$$
 \inf_{y\in G_{\e}} \Big( u(y)+ \tau K^*\big(\frac{x-y}{\tau}\big) \Big), \quad x \in G_{\e}.
$$
In dimension 1, 
if the grid $G_{\e}$ is discretized by retaining $k$  points only, the numerical cost is {\em a priori} of order $k^2$. As we will see now,  we can use a fast (min,plus)--convolution algorithm that turns out to
have a linear cost  (i.e. proportional to $k$) in many situations.

In order to ease the
presentation, we will not deal with functions defined on a grid, but
on functions defined on finite and closed intervals.
 
Let $a,b\in \R{}$ with $a<b$. We write $f:[a,b] \rightarrow \R{}$ if $f$ is such that 
$$ 
\left\{ \begin{array}{ll}
    f(x)<\infty & \text{if } x\in[a,b]\\
    f(x)=\infty & \text{otherwise.}
  \end{array} \right. $$
  
For $f:[a,b]\rightarrow \R{}$, we say that $f$ is respectively convex,
concave, affine if $f_{|[a,b]}$ is respectively convex, concave,
affine.  That is, the functions we consider are defined on
$\R{}$, finite on a closed interval and are said to inherit the properties they
satisfy on this interval.  Let $f:[a,b]\rightarrow \R{}$ and
$g:[c,d]\rightarrow \R{}$. The (min,plus)--convolution (or convolution
in the remaining of the paper) of $f$ and $g$ is defined for all $
x\in\R{}$, by
\begin{equation}
\label{eq:convolfg}
f\conv g(x) = \inf_{y\in\R{}}f(y) + g(x-y).
\end{equation}
Recall that $f \conv g = g \conv f$. 
As $f$ and $g$ are finite only on an interval, it is easy to see that for all $ x\in [a+c,b+d]$,
$$
f\conv g(x) = \inf_{y\in[a,b]}f(y) + g(x-y),
$$
and for all $ x\notin [a+c,b+d]$, $f\conv g(x) =\infty$. 

We will only consider piecewise affine functions and decompose them
according to their affine components: there exist $a_0 = a < a_i <
\cdots < a_n=b$ such that 
$$f = \min_{i\in\{0,\ldots,n-1\}} f_i,$$ where $f_i:[a_i,a_{i+1}] \rightarrow \R{}$ is an affine function. For $i = 0,\ldots,n-1$, we denote by $f'_i = \big(f(a_{i+1})-f(a_i)\big)/ (a_{i+1}-a_i)$ the {\em slope} of $f_i$ or the slope of $f$ on $[a_i,a_{i+1}]$.

\subsection{Convolution}

The fast algorithm to compute the convolution \eqref{eq:convolfg} is based on a decomposition of $g$ $(= u)$ in piecewise convex and concave functions. As the function $f$ $\big(= K^*(\cdot / \tau)\big)$ considered will always be convex \big(see equation \eqref{eq:conv}\big), we thus see that we are led to compute separately the convolution of convex by convex functions, and concave by convex functions defined on finite intervals. As we will see, each block can be computed at a linear cost. In the end, the global cost of the algorithm thus depends on the number of convex and concave components on $f$, a number which might increase in the time evolution of the numerical solution of the Hamilton--Jacobi equation. We will come back later to this matter, but we emphasize that this procedure can be very easily implemented in parallel, each convolution block being computed independently. 


We start with the following result, the proof of which can be found for example \cite{toolbox}. 
\begin{lm}[convolution of a convex function by an affine function]
\label{lm:seg}
  Let $f:[a,b] \rightarrow \R{}$ be a convex piecewise affine function and $g:[c,d]\rightarrow \R{}$ be an affine function of slope $g'$. Then 
$f\conv g: [a+c,b+d] \rightarrow \R{}$ is a convex piecewise affine function defined by
$$ f\conv g(x) = \left\{ \begin{array}{ll}
    f(x-c) + g(c) & \text{ if  } \quad a+c  \leqslant x  \leqslant \alpha+c,\\
    f(\alpha)+g(x-\alpha) & \text{ if  } \quad  \alpha+c < x  \leqslant \alpha+d,\\
    f(x-d) + g(d) & \text{ if  } \quad \alpha+d < x  \leqslant b+d,
  \end{array} \right. $$
where $\alpha = \min\{a_{i} \text{ in the decomposition of $f$} \;|\; f'_i \geqslant g'\}$.
\end{lm}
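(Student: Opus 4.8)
The plan is to turn the two–variable infimum into a one–dimensional convex minimization by absorbing the affine function $g$. Fix $x\in[a+c,b+d]$. Since $g$ is affine of slope $g'$ on $[c,d]$, we may write $g(z)=g(c)+g'(z-c)$, so that for admissible $y$ (those with $y\in[a,b]$ and $x-y\in[c,d]$)
$$
f(y)+g(x-y)=\big(f(y)-g'y\big)+\big(g(c)+g'(x-c)\big).
$$
The second bracket does not depend on $y$, so computing $f\conv g(x)$ amounts to minimizing the single-variable function $\phi(y):=f(y)-g'y$ over the interval $y\in[x-d,x-c]\cap[a,b]$, and then adding back the constant $g(c)+g'(x-c)$.

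Next I would analyze $\phi$. Because $f$ is convex piecewise affine with breakpoints $a_0<\cdots<a_n$ and increasing slopes $f'_0\leqslant\cdots\leqslant f'_{n-1}$, the function $\phi$ is again convex piecewise affine, with slope $f'_i-g'$ on $[a_i,a_{i+1}]$. Hence $\phi$ is nonincreasing up to the breakpoint where the slope first becomes nonnegative and nondecreasing afterwards; that breakpoint is exactly $\alpha=\min\{a_i\mid f'_i\geqslant g'\}$, which is therefore the global minimizer of $\phi$ on $[a,b]$. This is the structural fact that drives the three cases.

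The computation then splits according to the position of the admissible window $[x-d,x-c]$ relative to $\alpha$. If $a+c\leqslant x\leqslant\alpha+c$, then the whole window lies to the left of $\alpha$ where $\phi$ is nonincreasing, so the minimizer is the right endpoint $y=x-c$; substituting and simplifying $-g'(x-c)+g'(x-c)$ recovers $f(x-c)+g(c)$. If $\alpha+c<x\leqslant\alpha+d$, then $\alpha\in[x-d,x-c]$, so the interior minimizer $y=\alpha$ is selected and one gets $f(\alpha)+g(c)+g'(x-c-\alpha)=f(\alpha)+g(x-\alpha)$. Finally, if $\alpha+d<x\leqslant b+d$, the window lies to the right of $\alpha$ where $\phi$ is nondecreasing, so the minimizer is the left endpoint $y=x-d$, and using $-g'(x-d)+g'(x-c)=g'(d-c)$ together with $g(c)+g'(d-c)=g(d)$ yields $f(x-d)+g(d)$. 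This matches the three branches of the stated formula exactly.

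The step I expect to require the most care is the bookkeeping of the constraint $y\in[x-d,x-c]\cap[a,b]$: one must check in each regime that the claimed minimizer genuinely lies in this intersection (for instance $x-c\in[a,b]$ in the first case and $x-\alpha\in[c,d]$ in the second), that the three ranges of $x$ tile $[a+c,b+d]$, and that the formula values agree at the junctions $x=\alpha+c$ and $x=\alpha+d$ so that $f\conv g$ is well defined and continuous. Once this is settled, the resulting function is piecewise affine by inspection (two shifted copies of $f$ and one affine bridge), and its convexity follows either directly from the monotonicity of its slopes across the three pieces or from the general fact that the $(\min,+)$–convolution of convex functions is convex.
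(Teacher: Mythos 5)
Your proof is correct, but there is nothing in the paper to compare it against: the paper does not prove Lemma~\ref{lm:seg} at all, deferring instead to the reference cited just before its statement. Your argument is the standard self-contained one, and it works: absorbing the affine part reduces $f\conv g(x)$ to minimizing the tilted function $\phi(y)=f(y)-g'y$ over the window $[x-d,x-c]\cap[a,b]$; convexity of $f$ makes $\phi$ nonincreasing on $[a,\alpha]$ and nondecreasing on $[\alpha,b]$, so the minimizer is the right endpoint, the interior point $\alpha$, or the left endpoint according to where the window sits relative to $\alpha$, which is exactly the three branches of the formula. The endpoint bookkeeping you defer is indeed routine and you identify the right checks ($x-c\in[a,b]$, $x-\alpha\in[c,d]$, tiling of $[a+c,b+d]$, agreement at the junctions), and convexity of the result follows from the monotonicity of the slopes across the three pieces (all slopes $<g'$, then $g'$, then all slopes $\geqslant g'$). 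The only point worth adding is the degenerate situation where the set $\{a_i \mid f'_i\geqslant g'\}$ is empty (every slope of $f$ is below $g'$), in which case one should take $\alpha=b$ and the third regime disappears, and symmetrically $\alpha=a$ when all slopes are $\geqslant g'$, so the first regime is a single point; the lemma as stated glosses over this too, but a complete write-up should say it in one sentence.
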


\begin{figure}[htbp]
  \centering
  \input{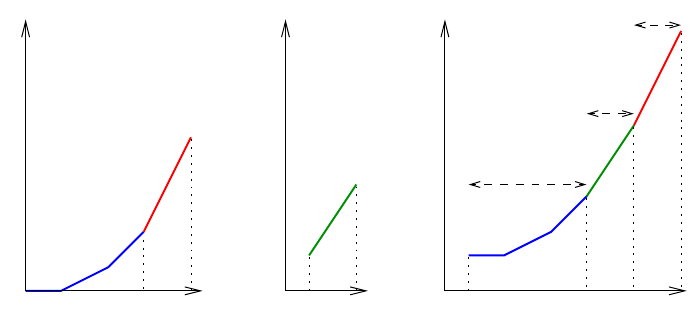_t}
  \caption{Convolution of a convex function by an affine function and decomposition into three functions. }
  \label{fig:conv-seg}
\end{figure}

Figure~\ref{fig:conv-seg} illustrates this lemma. In the rest of the
section, we will use a decomposition of such a convolution into three
parts: $f*g = \min(g^1,g^c,g^2)$, where
\begin{enumerate}
\item[(i)] $g^1 =f*g_{|[c+a,c+\alpha]}$; 
\item[(ii)] $g^c = f*g_{|[c+\alpha,d+\alpha]}$;
\item[(iii)] $g^2 = f*g_{|[d+\alpha,d+b]}$.
\end{enumerate}
 In other words, $g^1$ is composed of
the segments of $f$ whose slope  is   strictly less than that of $g$,
$g^c$ corresponds to the segment $g$ and $g^2$ is composed of the
segments of $f$ whose slope  is   greater than or equal to that of
$g$. Note that $g^c$ is also concave.

A direct consequence of this lemma is Theorem~\ref{th:convex}, stated
in~\cite{Leboudec}. A complete proof is presented in~\cite{BJT2008},
 but can also be deduced from previous works about the Legendre
transform: the Legendre transform is an involution on the set of  convex functions and
can be computed in linear time (see \cite{L97} for example). Moreover,
the transform of the (min,plus)--convolution of two functions is the
addition of the respective transforms of the two functions, inducing an
alternative linear-time algorithm. 
\begin{Th}[convolution of a convex function by a convex function]\label{th:convex}
  If $f$ and $g$ are convex and piecewise affine, then $f\conv g$ is obtained by putting end--to--end the different linear pieces of $f$ and $g$ sorted by increasing slopes.
\end{Th}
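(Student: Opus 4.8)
The plan is to prove that the $(\min,+)$-convolution of two convex piecewise affine functions is obtained by sorting all the linear pieces of $f$ and $g$ by increasing slope and concatenating them end-to-end. My strategy is to reduce this theorem to the already-established Lemma~\ref{lm:seg} (convolution of a convex function by an affine function) by decomposing $g$ into its affine pieces.

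First I would write $g = \min_{j\in\{0,\ldots,m-1\}} g_j$, where each $g_j:[b_j,b_{j+1}]\to\R{}$ is affine of slope $g'_j$, and the slopes are increasing since $g$ is convex. A basic but essential observation is that $(\min,+)$-convolution distributes over pointwise minimum: for any functions, $f\conv(\min(h_1,h_2)) = \min(f\conv h_1, f\conv h_2)$. This follows directly from $\inf_y f(y) + \min(h_1,h_2)(x-y) = \min\bigl(\inf_y f(y)+h_1(x-y),\ \inf_y f(y)+h_2(x-y)\bigr)$, swapping the two infima. Hence $f\conv g = \min_j (f\conv g_j)$, and each term $f\conv g_j$ is handled by Lemma~\ref{lm:seg}.

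Next I would invoke Lemma~\ref{lm:seg} on each piece. Applied to $f$ (convex) and $g_j$ (affine of slope $g'_j$), the lemma says $f\conv g_j$ consists of three parts: the pieces of $f$ with slope strictly less than $g'_j$, then a translated copy of the segment $g_j$ itself inserted at exactly the location where the slope of $f$ crosses $g'_j$, then the pieces of $f$ with slope $\geqslant g'_j$. Taking the minimum over all $j$, one checks that the surviving pieces assemble precisely into the list of all slopes of $f$ and all slopes of $g$, arranged in increasing order: each segment of $f$ of slope $f'_i$ survives in those convolutions $f\conv g_j$ where it is not dominated, and each segment $g_j$ contributes its own slope $g'_j$ exactly once. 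Since the result is a minimum of convex functions sharing the same total increasing-slope profile, it is itself convex, and a convex piecewise affine function is determined by the multiset of its slopes together with its value at one endpoint. I would fix the value at the left endpoint $x = a+c$ by the first line of Lemma~\ref{lm:seg} (where $f\conv g_0(a+c) = f(a)+g(c)$), so the whole graph is pinned down.

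The main obstacle I anticipate is the bookkeeping in the final assembly step: verifying that when the three-part decompositions of the individual $f\conv g_j$ are superimposed via the outer minimum, the overlapping ``convex'' parts $g^c_j$ and the $f$-pieces match up consistently at their breakpoints so that no spurious segment is created and none is lost. The cleanest way to control this is to argue at the level of slopes rather than explicit endpoints: since taking $(\min,+)$-convolution corresponds to adding the Legendre transforms (the convex conjugate of $f\conv g$ is $f^* + g^*$), the slope multiset of $f\conv g$ is exactly the union of the slope multisets of $f$ and $g$, and convexity forces the unique increasing arrangement. This conjugate-duality viewpoint makes the concatenation statement essentially automatic, with Lemma~\ref{lm:seg} supplying the base case and the distributivity over $\min$ reducing the general convex $g$ to affine pieces. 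A full proof is given in \cite{BJT2008}.
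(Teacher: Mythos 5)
Your proposal is correct, but it completes the proof by a genuinely different route than the paper. The paper presents this theorem as a ``direct consequence'' of Lemma~\ref{lm:seg} and defers the actual proof to the cited reference \cite{BJT2008}; your first two paragraphs reconstruct exactly that intended route (decompose $g$ into affine pieces, use distributivity of $\conv$ over $\min$, apply Lemma~\ref{lm:seg} to each $f\conv g_j$), and you correctly identify that the hard part is the superposition bookkeeping --- which, if done by hand, would require an induction of the same flavor as the paper's proof of Theorem~\ref{th:ccc} in the concave case. What you add is the conjugate-duality argument that makes this bookkeeping unnecessary: since $(f\conv g)^* = f^* + g^*$, the derivative of $(f\conv g)^*$ is the sum of two monotone step functions whose jump locations and jump sizes encode the slopes and lengths of the pieces of $f$ and $g$; conjugating back shows that $f\conv g$ has, for each slope, a piece whose length is the total length of the pieces of that slope in $f$ and $g$, arranged increasingly and pinned down by the value $f(a)+g(c)$ at the left endpoint $a+c$. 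This is a self-contained proof where the paper has only a citation. Two repairs are needed, both minor. First, your stated reason for convexity of $f\conv g$ --- ``a minimum of convex functions sharing the same increasing-slope profile is convex'' --- is not a valid principle (e.g.\ $\min(|x|,|x-3|)$ is not convex); convexity is nevertheless needed, because biconjugation recovers $f\conv g$ only if it is convex and lower semicontinuous. The correct justification is standard: the infimal convolution of convex functions is convex (its strict epigraph is the Minkowski sum of the strict epigraphs), and here the infimum is attained over a compact interval, so $f\conv g$ is continuous on its domain, hence lower semicontinuous. Second, ``the multiset of slopes plus the value at one endpoint'' determines a convex piecewise affine function only if each slope carries its length, i.e.\ you must work with the multiset of pieces, which is what the duality computation actually delivers. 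With these repairs your argument is complete and arguably more illuminating than the reference the paper points to.
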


For sake of completeness, we give below Algorithm~\ref{algo:convex} for computing the (min,plus)--convolution of two convex piecewise affine functions  without having to compute any Legendre transform. 

%

\begin{algorithm}
\label{algo:convex}
  \KwData{$f:[0,n]\rightarrow \R{}$ a convex function with slopes
    $(r_i)$, $g:[0,m]\rightarrow \R{}$ a convex function with slopes
    $(\rho_i)$.}  
  \KwResult{$h = f\conv g$}
  \Begin{
    $i\gets 0$; $j\gets 0$; $h(0)\gets f(0) + g(0)$\;
    \While{$i+j< n+m$}
    {
      \If{$i\neq n$ and ($r_i < \rho_j$ or $j=m$)}{$h(i+j+1)\gets h(i+j) + r_i$; $i\gets i+1$\;}
      \Else{$h(i+j+1)\gets h(i+j) + \rho_j$; $j\gets j+1$\;}}
}
\caption{Convolution of two convex functions}
\end{algorithm}

\medskip 

We now turn to the case where $f$ is convex and $g$ is concave, 
and the Legendre tranform cannot help anymore to design an
algorithm as in \cite{brenier,L97}.  We begin with the following lemma:

\begin{lm}
  Let $f:[a,b] \rightarrow \R{}$ be a convex piecewise affine function
  and $g:[c,d]\rightarrow \R{}$ be a concave piecewise affine function which decomposition is $g=\min\limits_{j=1}^m g_j$. Then
  $$ f\conv g = \min_{j=1}^m f\conv g_j.$$
\end{lm}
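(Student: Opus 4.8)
The plan is to observe that the claimed identity is in fact a completely general distributivity property of the (min,plus)-convolution over finite pointwise minima, and that the convexity of $f$ and the concavity of $g$ play no role in the equality itself. First I would recall that, with the convention adopted in this section that a piecewise affine function equals $+\infty$ outside its interval of definition, the decomposition $g = \min_{j=1}^m g_j$ into affine pieces holds \emph{pointwise}: for every $z$, the only piece $g_j$ whose subinterval contains $z$ is finite there, so $g(z) = \min_{1\leqslant j\leqslant m} g_j(z)$.

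The core computation is then, for a fixed $x \in \R$,
\begin{align*}
f\conv g(x) &= \inf_{y\in\R} \Big( f(y) + \min_{1\leqslant j \leqslant m} g_j(x-y) \Big)\\
&= \inf_{y\in\R} \min_{1\leqslant j \leqslant m} \Big( f(y) + g_j(x-y) \Big)\\
&= \min_{1\leqslant j \leqslant m} \inf_{y\in\R} \Big( f(y) + g_j(x-y) \Big)\\
&= \min_{1\leqslant j\leqslant m} f\conv g_j(x).
\end{align*}
The only step that requires justification is the interchange of $\inf_y$ and $\min_j$ between the second and third lines. This is legitimate precisely because the index set $\{1,\dots,m\}$ is finite: both expressions equal the infimum of the doubly-indexed family $\big(f(y)+g_j(x-y)\big)_{y\in\R,\,1\leqslant j\leqslant m}$ taken over the product, so commuting the two infima changes nothing. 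The $+\infty$ arithmetic causes no difficulty, since both addition of $+\infty$ and the infimum operation are well behaved in $\R\cup\{+\infty\}$.

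There is essentially no analytic obstacle here; the statement is an algebraic identity and the argument above is complete once the pointwise decomposition is noted. The substantive point, and the reason the lemma is stated with these hypotheses, is its use in the algorithm: each $g_j$ is affine, so by Lemma \ref{lm:seg} every block $f\conv g_j$ is a convex piecewise affine function computable at linear cost, and the concave factor $g$ is thereby reduced to $m$ convex-by-affine convolutions whose pointwise minimum is reassembled. Thus the only thing I would emphasize beyond the one-line computation is that convexity and concavity are irrelevant to the equality itself and serve only to make the right-hand side efficiently computable.
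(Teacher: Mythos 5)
Your proof is correct and follows exactly the paper's approach: the paper disposes of this lemma in one line as ``a direct consequence of the distributivity of $\conv$ over the minimum,'' which is precisely the interchange of $\inf_y$ and $\min_j$ that you spell out and justify via finiteness of the index set. Your added remarks on the $+\infty$ convention and on the algorithmic role of the hypotheses are accurate elaborations, not a different route.
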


\begin{proof}
This is a direct consequence of the distributivity of $\conv$ over the minimum.
\end{proof}

Now, consider two functions $f:[a,b]\rightarrow \R{}$, convex, and
$g:[c,d]\rightarrow \R{}$, concave, with respective decompositions in
$f_i:[a_i,a_{i+1}]\rightarrow \R{}$, $i\in\{0,n-1\}$ and
$g_j:[c_j,c_{j+1}]\rightarrow \R{}$, $j\in\{0,m-1\}$. The following
lemma, that considers two consecutive affine functions of $g$, leads
to an efficient algorithm to compute the convolution of a convex
function by a concave function.

\begin{lm}
\label{lm:simplif}
Consider the convolutions $f\conv g_{j-1}$ and $f\conv g_{j}$. Let 
$$
\alpha_j
= \min\{a_{i} \text{ in  the decomposition of $f$} \;|\; f'_i\geqslant
g'_j\}
$$   
and 
$$
\alpha_{j-1}
= \min\{a_{i} \text{ in the decomposition of $f$} \;|\; f'_i\geqslant
g'_{j-1}\}.
$$
 Then
  \begin{itemize}
  \item $\forall x \leqslant c_{j} + \alpha_{j}$,\ \  $f\conv g_{j}(x)  \geqslant f\conv g_{j-1}(x)$;
  \item $\forall x \geqslant c_{j} + \alpha_{j-1}$,\ \  $f\conv g_{j-1}(x)  \geqslant f\conv g_{j}(x)$.
  \end{itemize}
\end{lm}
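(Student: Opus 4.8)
The plan is to use the explicit piecewise formula of Lemma~\ref{lm:seg} for each of the two convolutions $f\conv g_{j-1}$ and $f\conv g_{j}$, exploiting that the consecutive affine pieces of $g$ meet at the breakpoint $c_j$, where $g_{j-1}(c_j)=g_j(c_j)=g(c_j)$ by continuity of $g$. First I would record the ordering $\alpha_j\leqslant\alpha_{j-1}$: since $g$ is concave its slopes decrease, so $g'_{j-1}\geqslant g'_j$, and since $f$ is convex its slopes $f'_i$ increase; hence the smallest breakpoint $a_i$ with $f'_i\geqslant g'_{j-1}$ is at least the smallest one with $f'_i\geqslant g'_j$. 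This guarantees that the two ranges in the statement overlap only across the gap $[c_j+\alpha_j,c_j+\alpha_{j-1}]$, on which nothing is asserted. The geometric content is that near the shared corner $c_j$ both convolutions are translates of $f$ anchored at $c_j$ with the common value $g(c_j)$ added, so they touch along $f(\,\cdot-c_j)+g(c_j)$, and the ordering $\alpha_j\leqslant\alpha_{j-1}$ fixes on which side each dominates.

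For the first inequality, fix $x\leqslant c_j+\alpha_j$. If $x<a+c_j$ then $f\conv g_j(x)=+\infty$ and there is nothing to prove, so assume $a+c_j\leqslant x\leqslant c_j+\alpha_j$. This is exactly the first region in Lemma~\ref{lm:seg} applied to $f\conv g_j$ (with $c=c_j$, $\alpha=\alpha_j$), whence $f\conv g_j(x)=f(x-c_j)+g_j(c_j)=f(x-c_j)+g(c_j)$. Evaluating the infimum defining $f\conv g_{j-1}(x)$ at the point $y=x-c_j$, which lies in $[a,b]$ since $a\leqslant x-c_j\leqslant\alpha_j\leqslant b$ and for which $x-y=c_j$ is the right endpoint of the domain $[c_{j-1},c_j]$ of $g_{j-1}$, gives $f\conv g_{j-1}(x)\leqslant f(x-c_j)+g_{j-1}(c_j)=f(x-c_j)+g(c_j)$. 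Combining the two yields $f\conv g_{j-1}(x)\leqslant f\conv g_j(x)$.

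The second inequality is symmetric. For $x\geqslant c_j+\alpha_{j-1}$, the case $x>b+c_j$ is trivial because then $f\conv g_{j-1}(x)=+\infty$; otherwise $c_j+\alpha_{j-1}\leqslant x\leqslant b+c_j$ lies in the third region of Lemma~\ref{lm:seg} applied to $f\conv g_{j-1}$ (here $c=c_{j-1}$, $d=c_j$, $\alpha=\alpha_{j-1}$), so that $f\conv g_{j-1}(x)=f(x-c_j)+g_{j-1}(c_j)=f(x-c_j)+g(c_j)$; the boundary value $x=c_j+\alpha_{j-1}$ agrees by continuity across regions. Testing the infimum for $f\conv g_j(x)$ at the same point $y=x-c_j$, with $x-y=c_j$ now the left endpoint of the domain $[c_j,c_{j+1}]$ of $g_j$ and $y\in[a,b]$ since $a\leqslant\alpha_{j-1}\leqslant x-c_j\leqslant b$, gives $f\conv g_j(x)\leqslant f(x-c_j)+g(c_j)=f\conv g_{j-1}(x)$.

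The only delicate point is the bookkeeping: in each case one must check that the test point $y=x-c_j$ genuinely lies in $[a,b]$, that $x-y=c_j$ belongs to the domain of the relevant affine piece of $g$, and that the range of $x$ under consideration matches the correct region (first, respectively third) in the formula of Lemma~\ref{lm:seg}, together with the trivial $+\infty$ cases outside the domains. Once these verifications are in place the two inequalities follow immediately from the explicit formula and the shared value $g(c_j)$.
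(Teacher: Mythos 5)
Your proof is correct, and it takes a genuinely different route from the paper's. The paper applies Lemma~\ref{lm:seg} to \emph{both} convolutions, obtaining exact piecewise formulas for $f\conv g_{j-1}$ and $f\conv g_j$, and then compares them by a case analysis (whether $x\leqslant c_{j-1}+\alpha_{j-1}$ or $x>c_{j-1}+\alpha_{j-1}$) together with slope estimates of the form $f(x-c_{j-1})-f(x-c_j)\leqslant g'_{j-1}\cdot(c_j-c_{j-1})$, which rely on the convexity of $f$ and the definition of $\alpha_{j-1}$. You instead use Lemma~\ref{lm:seg} only once per inequality, to identify the region where the convolution in question is exactly the translate $f(\,\cdot-c_j)+g(c_j)$ anchored at the shared corner, and you dispose of the other convolution by the trivial infimum bound at the single test point $y=x-c_j$, using the continuity identity $g_{j-1}(c_j)=g_j(c_j)=g(c_j)$. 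This eliminates both the case split and the convexity/slope comparisons: the only structural input is that the corner $c_j$ is an admissible argument for both affine pieces. What the paper's heavier computation buys is explicit knowledge of $f\conv g_{j-1}$ on the whole range (which is reused in spirit in the proof of Theorem~\ref{th:ccc}), whereas your argument is more economical and makes the mechanism transparent; where one convolution attains its infimum at the shared corner, the other can only be smaller. Your bookkeeping of the degenerate $+\infty$ cases and of the boundary point $x=c_j+\alpha_{j-1}$ (second region versus third region of Lemma~\ref{lm:seg}) is also careful and correct, and these points are in fact glossed over in the paper.
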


\begin{proof}
  First, as $f$ is convex and $g'_{j-1} > g'_{j}$, we have that $\alpha_{j-1} \geqslant
  \alpha_{j}$.  
  From Lemma~\ref{lm:seg}, for all $x  \leqslant c_{j} + \alpha_{j}$,
  $$f\conv g_{j}(x) = f(x-c_{j}) + g(c_{j}).$$ 

  Either $x  \leqslant c_{j-1} + \alpha_{j-1}$, then $f\conv g_{j-1}(x) = f(x-c_{j-1}) +
  g(c_{j-1})$ and 
  $$
  f\conv g_{j}(x) - f\conv g_{j-1}(x) = f(x-c_{j})-
  f(x-c_{j-1}) + g(c_{j}) - g(c_{j-1});
  $$ 
  as $x-c_{j-1}  \leqslant \alpha_{j-1}$, then
  $f(x-c_{j-1})-f(x-c_{j})  \leqslant g'_{j-1}\!\cdot\!(c_j-c_{j-1})$ and $f\conv
  g_{j}(x) - f\conv g_{j-1}(x) \geqslant 0$;

  or $x > c_{j-1} + \alpha_{j-1}$, then $f\conv g_{j-1}(x) = f(\alpha_{j-1}) +
  g(x-\alpha_{j-1})$; as $c_{j-1}<x-\alpha_{j-1} \leqslant x-\alpha_j \leqslant c_j$, $g(c_{j})-g(x-\alpha_{j-1}) =
  g'_{j-1}\!\cdot\! (c_j+\alpha_{j-1}-x)$ and $f(\alpha_{j-1}) - f(x-c_j) \leqslant
  g'_{j-1}\!\cdot\!(c_j+\alpha_{j-1}-x)$; then $f\conv g_{j}(x) - f\conv g_{j-1}(x)\geqslant
  0$.

  The second statement can be proved similarly. 
\end{proof}

\begin{figure}[htbp]
  \centering
  \input{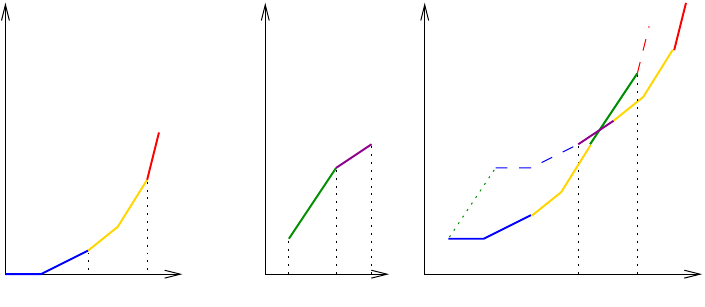_t}
  \caption{Convolution of a convex function by a concave function. }
  \label{fig:conv-sconc}
\end{figure}

Another formulation of Lemma~\ref{lm:simplif} is that $g^1_j\geqslant f*g_{j-1}$ and that $g^2_{j-1}\geqslant f*g_j$ and that the two functions intersect at least once. Hence $g_j^1$ and $g^2_{j-1}$ cannot appear in the minimum of $f*g_j$ and $f*g_{j-1}$. By transitivity, there is no need to
compute entirely the convolution of the convex function by every
affine component of the decomposition of the concave function. If
there are more than two segments, successive applications of this lemma
show that only the position of the segments of the concave function must be
computed, except for the extremal segments. 

The following lemma shows that $f*g_{j}$ and $f*g_{j-1}$ intersect  in one
and only one connected component,  as for a given abscissa, the slope of $f*g_j$ is less
than the one of $f*g_{j-1}$.
\begin{lm}
  Let $x\in\R$ be any real number for which both $f*g_j$ and $f*g_{j-1}$ are finite valued and differentiable. Then
  \begin{equation}\label{comparaison}
  \frac{\d}{\d x} f*g_j(x) \leqslant \frac{\d}{\d x}f*g_{j-1}(x).
  \end{equation}
\end{lm}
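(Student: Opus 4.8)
The plan is to make the derivative of each convolution $f\conv g_j$ fully explicit through Lemma~\ref{lm:seg}, and then to exhibit a single value that separates $\frac{\d}{\d x}f\conv g_j(x)$ from $\frac{\d}{\d x}f\conv g_{j-1}(x)$. The natural candidate is $f'(x-c_j)$, the slope of $f$ read at the image of the breakpoint $c_j$ that is common to the domains $[c_{j-1},c_j]$ of $g_{j-1}$ and $[c_j,c_{j+1}]$ of $g_j$. Concretely, I would prove the two-sided estimate $\frac{\d}{\d x}f\conv g_j(x)\leqslant f'(x-c_j)\leqslant \frac{\d}{\d x}f\conv g_{j-1}(x)$, which gives \eqref{comparaison} at once.

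First I would differentiate the three-part formula of Lemma~\ref{lm:seg} applied to $g_j$ (with $c=c_j$ and $d=c_{j+1}$). On the first piece ($x\leqslant \alpha_j+c_j$) one gets $f'(x-c_j)$, on the middle piece the constant slope $g'_j$, and on the third piece ($x> \alpha_j+c_{j+1}$) the slope $f'(x-c_{j+1})$. In each case I would then check $\frac{\d}{\d x}f\conv g_j(x)\leqslant f'(x-c_j)$: it is an equality on the first piece; on the middle piece it reads $g'_j\leqslant f'(x-c_j)$, which holds because $x-c_j> \alpha_j$ forces $f'(x-c_j)\geqslant g'_j$ by the definition of $\alpha_j$; on the third piece it reduces to $f'(x-c_{j+1})\leqslant f'(x-c_j)$, true since $c_{j+1}>c_j$ and $f'$ is nondecreasing ($f$ convex).

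Symmetrically, applying Lemma~\ref{lm:seg} to $g_{j-1}$ (with $c=c_{j-1}$ and $d=c_j$) and differentiating, the three slopes are $f'(x-c_{j-1})$, $g'_{j-1}$ and $f'(x-c_j)$. The same bookkeeping yields $\frac{\d}{\d x}f\conv g_{j-1}(x)\geqslant f'(x-c_j)$: it is an equality on the third piece; on the middle piece it becomes $g'_{j-1}\geqslant f'(x-c_j)$, which holds because $x-c_j\leqslant \alpha_{j-1}$ forces $f'(x-c_j)\leqslant g'_{j-1}$; and on the first piece it is $f'(x-c_{j-1})\geqslant f'(x-c_j)$, again by monotonicity of $f'$. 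Chaining the two bounds through the common value $f'(x-c_j)$ proves the lemma.

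The computations are elementary once the explicit slopes are in hand; the only delicate points are the breakpoint values, where I would invoke the standing hypothesis that $x$ is a point of differentiability of both $f\conv g_j$ and $f\conv g_{j-1}$, so that the slope comparisons tied to the definitions of $\alpha_j$ and $\alpha_{j-1}$ are unambiguous. The real content—and the step I would pin down first—is the recognition that the breakpoint $c_j$ shared by $g_{j-1}$ and $g_j$ supplies the separating slope $f'(x-c_j)$; everything else is pure monotonicity of $f'$. As a sanity check, the inequality also follows from the envelope identity $\frac{\d}{\d x}f\conv g(x)=g'(x-y^\ast)=f'(y^\ast)$ at a minimizer $y^\ast$: were the order reversed, the two minimizers would be forced to coincide at the shared value $x-y^\ast=c_j$, where the one-sided subdifferentials of $g_{j-1}$ and $g_j$ put the two slopes on opposite sides of the gap $[g'_j,g'_{j-1}]$, a contradiction.
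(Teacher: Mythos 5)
Your argument is correct in substance and uses the same raw ingredients as the paper's proof (the three-piece description of $f\conv g_j$ from Lemma~\ref{lm:seg}, plus monotonicity of the slopes of the convex function $f$), but it is organized around a device the paper does not use: the separating value $f'(x-c_j)$. The paper instead partitions the common domain $[a_0+c_j,\,a_n+c_j]$ into the three zones $[a_0+c_j,\alpha_j+c_j]$, $[\alpha_j+c_j,\alpha_{j-1}+c_j]$, $[\alpha_{j-1}+c_j,a_n+c_j]$ and compares the two derivatives zone by zone; the delicate step there is the middle zone, where $f\conv g_j$ consists of the segment $g_j$ followed by segments of $f$, while $f\conv g_{j-1}$ consists of segments of $f$ followed by $g_{j-1}$, so two interleaved slope profiles must be compared head-on (and the paper's justification of that zone is terse). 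Your pivot $f'(x-c_j)$ --- which works precisely because the first piece of $f\conv g_j$ and the third piece of $f\conv g_{j-1}$ involve the \emph{same} shift $c_j$ --- decouples the problem into two independent three-case checks, each a one-line consequence of the definitions of $\alpha_j$, $\alpha_{j-1}$ and of the monotonicity of $f'$. This removes the interleaving difficulty entirely and is, to my mind, a cleaner bookkeeping than the paper's. (Your closing ``sanity check'' via minimizers is only heuristic for piecewise affine functions and should not be leaned on; the case analysis is the proof.)

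One technical point deserves more care than you give it. Differentiability of both convolutions at $x$ does \emph{not} force $f$ to be differentiable at $x-c_j$: a kink of $f$ at a breakpoint $a_i$ with $\alpha_j<a_i<\alpha_{j-1}$ reappears shifted by $c_{j+1}$ in $f\conv g_j$ and by $c_{j-1}$ in $f\conv g_{j-1}$, but not (generically) by $c_j$; hence $x=a_i+c_j$ can be a smooth point of both convolutions while $f'(x-c_j)$ is undefined, and your appeal to ``the standing hypothesis'' does not cover this. The fix is immediate: run your chain with any one-sided slope $s$ of $f$ at $x-c_j$. All the inequalities you need hold for every such $s$: on the middle piece of $f\conv g_j$ one has $x-c_j>\alpha_j$, hence $s\geqslant f'_+(\alpha_j)\geqslant g'_j$ where $f'_+(\alpha_j)$ denotes the slope of $f$ just to the right of $\alpha_j$; on the middle piece of $f\conv g_{j-1}$ one has $x-c_j<\alpha_{j-1}$, hence $s\leqslant f'_-(\alpha_{j-1})\leqslant g'_{j-1}$; and the remaining cases only use monotonicity of one-sided slopes, which convexity provides. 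Note finally that the hypothesis that both convolutions are \emph{defined} at $x$ is exactly what guarantees $x-c_j\in[a_0,a_n]$, so that the pivot makes sense at all; this is implicit in your write-up and is worth stating.
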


\begin{proof}
\label{lm:deriv}

  For $x\in[a_0, \alpha_j]$,
  $f*g_{j}(x+c_j) = f(x-c_j) + g(c_j)$ and $f*g_{j-1}(x+c_{j-1}) = f(x)+
  g(c_{j-1})$. As $f$ is convex and $c_j\geqslant c_{j-1}$, the result holds on $[a_0+c_j, \alpha_j+c_j]$. Similarily, the
  result holds for $x\in[\alpha_{j-1}+c_j, a_n+c_j]$.
  
  On $[c_j+\alpha_j, c_j+\alpha_{j-1}]$, $f*g_{j}$ is composed
  of segment $g_{j}$ concatenated with the segments $f_i$,
  $i\in[\alpha_{j-1},\alpha_j]$, possibly truncated on the right and
  $f*g_{j-1}$ is composed of segments $f_i$,
  $i\in[\alpha_{j-1},\alpha_j]$ concatenated with $g_{j-1}$, possibly
  truncated on the left. As $\forall i \in[\alpha_{j-1},\alpha_j]$,
  $g'_{j-1}\leqslant f'_i \leqslant g'_j$, the result holds.
\end{proof}

If one sets by convention $\frac{\d}{\d x}f*g_j(x) = -\infty$ for
$x<c_{j-1}+a_0$ and $\frac{\d}{\d x}f*g_j(x) = +\infty$ for
$x>c_{j}+a_n$, then the inequality always holds. 

The intersection of $f*g_{j-1}$ and $f*g_{j}$ can then happen in one and only one of the four cases: 
\begin{enumerate}
\begin{minipage}[t]{0.45\linewidth}
\item $g^1_{j-1}$ and $g^c_j$ intersect; 
\item $g^1_{j-1}$ and $g^2_j$ intersect; 
\end{minipage}
\begin{minipage}[t]{0.45\linewidth}
\item $g^c_{j-1}$ and $g^c_j$ intersect; 
\item $g^c_{j-1}$ and $g^2_j$ intersect.
\end{minipage}
\end{enumerate}

The following theorem is another 
consequence of these lemmas and is more precise about the shape on the convolution of a convex function by a concave function.

\begin{Th}[convolution of a convex function by a
  concave function]
\label{th:ccc}  The (min,plus)--convolution of a convex function by a concave function can be decomposed in three (possibly trivial) parts:
a convex function, a concave function and a convex function.
\end{Th}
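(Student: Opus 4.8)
The plan is to read the three parts directly off the lower envelope $f\conv g=\min_j f\conv g_j$, using the domination statements already established. First I would write $g=\min_{j=0}^{m-1}g_j$ and apply the distributivity lemma above to obtain $f\conv g=\min_{j=0}^{m-1}f\conv g_j$. By Lemma~\ref{lm:seg} each $f\conv g_j$ is convex and decomposes as $\min(g^1_j,g^c_j,g^2_j)$, with $g^1_j,g^2_j$ convex and $g^c_j$ affine of slope $g'_j$ (the image of the segment $g_j$). The reformulation recorded just after Lemma~\ref{lm:simplif} gives $g^1_j\geqslant f\conv g_{j-1}\geqslant f\conv g$ for $j\geqslant 1$ and $g^2_j\geqslant f\conv g_{j+1}\geqslant f\conv g$ for $j\leqslant m-2$, so every intermediate convex piece is dominated and can be dropped from the minimum:
$$
f\conv g=\min\big(g^1_0,\ g^c_0,\ g^c_1,\ldots,g^c_{m-1},\ g^2_{m-1}\big).
$$

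Next I would order these surviving pieces along the $x$-axis. The slope inequality~\eqref{comparaison} says that the derivatives satisfy $(f\conv g_j)'\leqslant (f\conv g_{j-1})'$ pointwise; combined with Lemma~\ref{lm:simplif} this forces any two consecutive convolutions $f\conv g_{j-1}$ and $f\conv g_j$ to cross exactly once, with $f\conv g_{j-1}$ below on the left and $f\conv g_j$ below on the right. A short comparison of $D=f\conv g_b-f\conv g_a$ for $b<a$ (which is nondecreasing because $D'\geqslant 0$) then shows that the minimising index is nondecreasing in $x$; hence the active pieces appear, from left to right, as $g^1_0$, then $g^c_0,g^c_1,\ldots,g^c_{m-1}$, and finally $g^2_{m-1}$. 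Reading off the domains from Lemma~\ref{lm:seg}, this fixes the three intervals $[c_0+a,c_0+\alpha_0]$, $[c_0+\alpha_0,d+\alpha_{m-1}]$ and $[d+\alpha_{m-1},d+b]$.

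It then remains to identify the shapes on the three intervals. The first and third blocks are $g^1_0$ and $g^2_{m-1}$, convex by Lemma~\ref{lm:seg}. The middle block is the concatenation of the affine pieces $g^c_0,\ldots,g^c_{m-1}$, met in this order, whose slopes $g'_0>g'_1>\cdots>g'_{m-1}$ strictly decrease by concavity of $g$; a continuous piecewise affine function with decreasing slopes is concave, so the middle block is concave. To see that the three parts fit together as convex/concave/convex I would check the junctions: at $c_0+\alpha_0$ the slope jumps up from the largest slope of $f$ below $g'_0$ to $g'_0$ (the peak, i.e.\ the convex-to-concave junction), and at $d+\alpha_{m-1}$ it jumps up from $g'_{m-1}$ to the smallest slope of $f$ above $g'_{m-1}$ (the trough, i.e.\ the concave-to-convex junction); together with the continuity of $f\conv g$ this yields the claimed decomposition, any of the three parts being possibly empty (for instance when $g'_0$ lies below all slopes of $f$, or when $g$ is a single segment, in which case the middle block is affine).

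The step I expect to be the main obstacle is proving the concavity of the middle block cleanly while respecting the interval domains: each $g^c_j$ is finite only on $[c_j+\alpha_j,c_{j+1}+\alpha_j]$, and one must verify that consecutive such intervals overlap (which follows from $\alpha_{j-1}\geqslant\alpha_j$, so that no gap opens) and that no intermediate convex piece re-enters between two consecutive segments. The cleanest way around this is to observe that on the middle interval $f\conv g$ coincides with $\min_j\ell_j$, where $\ell_j$ is the affine extension of $g^c_j$. Indeed $g^c_j\geqslant\ell_j$ gives $f\conv g\geqslant\min_j\ell_j$, while the bound $f\conv g(x)\leqslant f(\alpha_j)+g(x-\alpha_j)\leqslant\ell_j(x)$ --- valid since $x-\alpha_j$ stays in $[c,d]$ on the middle interval and a concave $g$ lies below the extension of each of its affine pieces --- gives the reverse inequality; being a minimum of affine functions, $\min_j\ell_j$ is concave.
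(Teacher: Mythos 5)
Your overall route (identify the lower envelope $\min_j f\conv g_j$ globally, discard dominated pieces, and read the convex/concave/convex structure off the survivors) is genuinely different from the paper's induction, but it contains a step that fails: the identification of the three blocks with the intervals $[c_0+a,\,c_0+\alpha_0]$, $[c_0+\alpha_0,\,d+\alpha_{m-1}]$ and $[d+\alpha_{m-1},\,d+b]$. The region where a piece is \emph{active} (i.e.\ realizes the envelope) is not its domain; the block boundaries are crossing points of the functions $f\conv g_j$, not $\alpha$-shifted domain endpoints. Concretely, take $f$ on $[0,10]$ with $f(0)=1$, $f(1)=f(9)=0$, $f(10)=1$ (slopes $-1,0,1$) and $g$ on $[0,3]$ with $g(0)=0$, $g(1)=1/2$, $g(3)=-1/2$ (slopes $1/2,-1/2$). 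Then $\alpha_0=9$ and $\alpha_1=1$, so your middle interval $[c_0+\alpha_0,\,d+\alpha_{m-1}]=[9,4]$ is empty; yet $f\conv g$ has slopes $-1,0,-1/2,0,1$ on $[0,1],[1,3],[3,4],[4,12],[12,13]$, so it has a nontrivial concave block (around $[3,4]$, nowhere near $x=9$), and the restriction of $f\conv g$ to your first interval $[0,9]$ is \emph{not} convex; indeed $f\conv g=-1/2<0=g^1_0$ on $[4,9]$, so the envelope does not even coincide with $g^1_0$ there. (The theorem itself survives: $[0,3]$, $[3,4]$, $[4,13]$ is a valid decomposition.) Consequently your junction analysis at $c_0+\alpha_0$ and $d+\alpha_{m-1}$, and your $\ell_j$-identity for the middle part --- whose upper bound requires $x-\alpha_j\in[c,d]$, i.e.\ requires precisely that middle interval --- argue about the wrong intervals. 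The paper avoids this by determining the block boundaries $\beta_j,\gamma_j$ inductively, as intersection points of the partial envelope $h_j$ with $f\conv g_{j+1}$ (the four cases at the end of its proof); your monotone-argmin observation is close in spirit to its single-intersection argument, but the blocks must be delimited by those crossings, not by the $\alpha_j$'s.

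There is a second, logical, gap at your very first step. From $g^1_j\geqslant f\conv g_{j-1}\geqslant f\conv g$ you conclude that $g^1_j$ ``can be dropped from the minimum''. Domination by the global minimum does not license removal: with $h_1=h_2=0$ and $h_3=1$, each of $h_1,h_2$ dominates $\min(h_1,h_2,h_3)$, yet dropping both changes the minimum. What is needed is that each dropped piece dominates the minimum of the \emph{kept} pieces, and your certificates are circular on exactly this point: $g^1_j$ is dominated by $f\conv g_{j-1}=\min(g^1_{j-1},g^c_{j-1},g^2_{j-1})$, whose piece $g^2_{j-1}$ you also drop, while $g^2_{j-1}$ is dominated by $f\conv g_j$, whose piece $g^1_j$ you also drop. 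The envelope formula you assert is in fact true (the mutually dominating pair $g^1_j,g^2_{j-1}$ can be simultaneously finite only at a common endpoint, where $g^c_j$ takes the same value), but establishing it requires that extra analysis or the crossing bookkeeping of Lemma~\ref{lm:simplif}; it does not follow from the domination inequalities alone.
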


\begin{proof}
  We use the notations defined in the former
  lemmas. 

  We now show by induction that the convolution of $f$ by $\min\limits_{\lambda\leqslant j}g_j$,
  denoted $h_j$, is composed of 
  \begin{enumerate}
  \item[(i)] a convex part $h_j^1$, which is the a
  restriction of $g_0(x)=f(x-c_0)+g(c_0)$ to $[c_0+a_0,\beta_j]$, with $\beta_j\leqslant
  c_0+a_n$; 
  \item[(ii)] a concave part $h_j^c$, which is a minimum of some segments
  $g_{\lambda}$, $\lambda\leqslant j$ (up to some translation) finite valued on $[\beta_j,\gamma_j]$; 
  \item[(iii)] a convex part $h_j^2$,
  $g_j(x)=f(x-c_{j+1})+g(c_{j+1})$ for $x\in[\gamma_j,a_n+c_{j+1}]$, $\gamma_j\geqslant a_0+c_{j+1}$.
  \end{enumerate}


Note that with these conventions, the real numbers $\beta_j$ and $\gamma_j$ are uniquely determined at each step of the induction.

  The case with $j=0$ is a direct consequence of Lemma~\ref{lm:seg}.
  The case with $j=1$ is a consequence of Lemmas~\ref{lm:simplif} and~\ref{lm:deriv}.
  The graphs of $f*g_1$ and $f*g_0$ intersect once and only once (where they are
  finite valued), and in $[c_1+\alpha_1, c_1+\alpha_0]$. Depending on when
  this intersection occurs, the concave part will be trivial, be made
  of only one (part of a) segment of $g$, or a minimum of the two segments $g_0$
  and $g_1$. 
  
  Suppose now that the result holds for $h_j$ and consider $h_{j+1} =
  \min (h_j, f*g_{j+1})$. The argument is exactly the same as for
  $j=1$: $h_j$ and $f*g_{j+1}$ can only intersect once and only once. Indeed, $h_j$ is the
  minimum of functions such that $\frac{\d}{\d x}f*g_k (x)\geqslant
  \frac{\d}{\d x}f*g_{j+1}(x)$, and then
  $\frac{\d}{\d x}h_j(x)\geqslant\frac{\d}{\d x}f*g_{j+1}(x)$. Note that this intersection has to occur after  the point $c_{j+1}+\alpha_{j+1}$. 
  
  Moreover, as $g_j^2$ does not intersect $g_{j+1}^2$  and that $h_j^2$ is a part of $g_j^2$ (by the induction hypothesis), $h_j^2$ does not intersect $g_{j+1}^2$.

  Therefore, only one of the four following cases may occur.

  \begin{enumerate}
  \item $h_j^c$ intersects $g_{j+1}^c$ and $h_{j+1}^1 = h_j^1$,
    $h_{j+1}^c = \min(h_j^c,g_{j+1}^c)$, $h_{j+1}^2 = g_{j+1}^2$,
    $\beta_{j+1} = \beta_j$ and $\gamma_{j+1} = \alpha_{j+1}+c_{j+2}$.
  \item $h_j^c$ intersects $g_{j+1}^2$ at $y$ and $h_{j+1}^1 = h_j^1$, $h_{j+1}^c = h_{j}^c$, $h_{j+1}^2 = g_{j+1}^2$,
    $\beta_{j+1} = \beta_j$ and $\gamma_{j+1} = y$.
  \item $h_j^1$ intersects $g_{j+1}^c$ at $y$ and $h_{j+1}^1 = h_j^1$, $h_{j+1}^c = h_{j}^c$, $h_{j+1}^2 = g_{j+1}^2$,
    $\beta_{j+1} = y$ and $\gamma_{j+1} = \alpha_{j+1}+c_{j+2}$.
  \item $h_j^1$ intersects $g_{j+1}^2$ at $y$ and $h_{j+1}^1 = h_j^1$, $h_{j+1}^c$ is trivial and $h_{j+1}^2 = g_{j+1}^2$,
    $\beta_{j+1} = \gamma_{j+1} = y$.
  \end{enumerate}
\end{proof}

If the concave function is composed of $m$ segments and the convex
function of $n$ segments, then the convolution of those two functions
can be computed in time $\cO(n+m\log m)$. The $\log m$ term comes from
the fact that one has to compute the minimum of $m$ segments (see
\cite{toolbox} for more details). If the functions are now defined on $\N$, 
then, as no intersection point has to be computed for the minimum, the time complexity is $O(n+m)$. The corresponding algorithm is given in Algorithm~\ref{algo:concave}, where 
without loss of generality (the (min,plus)--convolution is
shift-invariant), the functions $f$ and $g$ are defined on $\N$ and
finite between respectively $0$ and $n$, and $0$ and $m$. The slopes of the functions are thus $f'_i = f(i)-f({i-1})$ and $g'_i =
g(i)-g({i-1})$. 
\begin{algorithm}
  \label{algo:concave}
  \KwData{$f:[0,n]\rightarrow \R{}$ a convex function with slopes
    $(r_i)$, $g:[0,m]\rightarrow \R{}$ a concave function with slopes
    $(\rho_i)$.}  
  \KwResult{$h = f\conv g$}
  \Begin{
    \tcc{Initialization}
    $k\gets 0$\;
    \lWhile{$k \leqslant m+n$}{$h(k)\gets +\infty$; $k\gets k+1$\;}
    $i\gets 0$; $j\gets 0$; $h(0)\gets f(0) + g(0)$\;
    \tcc{First convex part of the convolution}
    \While{$f'_i  \leqslant g'_0$}{$i\gets i+1$; $h(i) \gets f(i) + g(0)$\;}
    \tcc{Concave part of the convolution}
    $j\gets j+1$; $h(i+j) \gets f(i) + g(j)$\;
    \While{$j < m$}{
      \lWhile{$g'_j< f'_{i-1}$}{$i\gets i-1$\;}
      $h(i+j)\gets \min (h(i+j),f(i)+g(j))$\;
      $h(i+j+1) \gets \min (h(i+j+1),f(i)+g(j+1))$\;
      $j\gets j+1 $\; }
    \tcc{Second convex part of the convolution}
    \While{$i<n$}{$i\gets i+1$; $h(i+m)\gets \min (h(i+m),f(i)+g(m))$\;}
  }
\caption{Convolution of a convex function by a concave function}
\end{algorithm}

\subsection{Application}
We now go back to our initial problem. As already explained above, the computation of $\Tc_{t,\e}^\tau u (t,x)$ given in \eqref{eq:eqconvol} is made of two steps:
\begin{itemize}
\item (min,plus)--convolution of $u$ and $h:x\mapsto \tau K^*(\frac{x}{\tau})$;
\item subtract $\tau V(t,x)$. 
\end{itemize}

Note that the (min,plus)--convolution described above is here defined on functions that
have an unbounded support. But in the periodic case, $u$ is 1--periodic and
$h$ is convex with a global minimum. Then, to compute the convolution,
it is enough to compute it on a single period (the
(min,plus--convolution preserves the periodicity), and replace $h$ by its 
restriction on a support of size $2$ centered on its  minimum. If $\e = 1/k$ with the notation of the previous section, then both functions $u$ and $h$ are defined on grids of size $k$ and $2k$ respectively. 

The convolution of $u$ and $h$ can be efficiently computed following these steps:

\begin{enumerate}
\item Decompose $u$ into convex and concave parts. This can be done in linear time: the three first points determine if a part is concave or convex. Then, this part is extended as much as possible while preserving the concavity or convexity and so on.
\item For each convex or concave part, perform the convolution with $h$ using Algorithms~\ref{algo:convex} or~\ref{algo:concave}.
\item Take the minimum of all these convolutions. 
\end{enumerate}
The complexity of this Algorithm is then $\mathcal{O}(ck)$, where $c$
is the number of components in the decomposition of $u$ into concave/convex parts.
%

\subsection{Implementation issues}

The main issue with this algorithm is that $c$ -- the number of components in the decomposition of $u$ -- can become very large,
and then lead to a quadratic time complexity, which is the complexity
of a  naive algorithm for computing the convolution.  Experimentally, the
reason for this is that, due to the discretization of $u$, nearly
affine parts, after performing the convolution several times, are
computed as fast alternations of convex and concave parts. As shown in
Figure~\ref{fig:toler}, one solution to make the computations more
efficient would be to consider those parts are convex and use
Algorithm~\ref{algo:convex}.

\begin{figure}[htbp]
  \centering
  \input{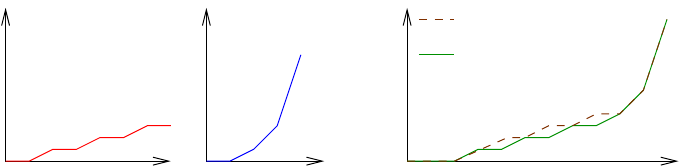_t}
  \caption{Approximation of the convolution: plain line shows the convolution of $u$ and $h$, and the dashed line shows the function computed using Algorithm~\ref{algo:convex} when $u$ is not convex, but has very small variations. }
  \label{fig:toler}
\end{figure}

To do this, we decompose $u$ into convex
and concave parts with a tolerance (we do not request for convex parts
to have increasing increments, but the increments to have an increase
more than $-\eta$). We will discuss this in the next section. The choice of an optimal tolerance $\eta$, as well as the comparison with parallel implementations, will be the subject of further studies. 

\section{Numerical simulations}

\subsection{Pendulum}

We take the Hamiltonian \eqref{eq:HPV} with $P = 0$ and $V(t,x) = 1 - \cos(x)$ on $(0,2\pi]\times \R$,  with periodic boundary conditions. The initial value is $u_0(x) = \cos(x)$.  
In this case the corresponding solution develops a singularity in the derivative in $x = \pm \pi$ and the solution is not smooth. The numerical solution at $T = 8$ is plot in figure \ref{figpendulum}.

\begin{figure}[ht]
\begin{center}
\rotatebox{0}{\resizebox{!}{0.40\linewidth}{%
   \includegraphics{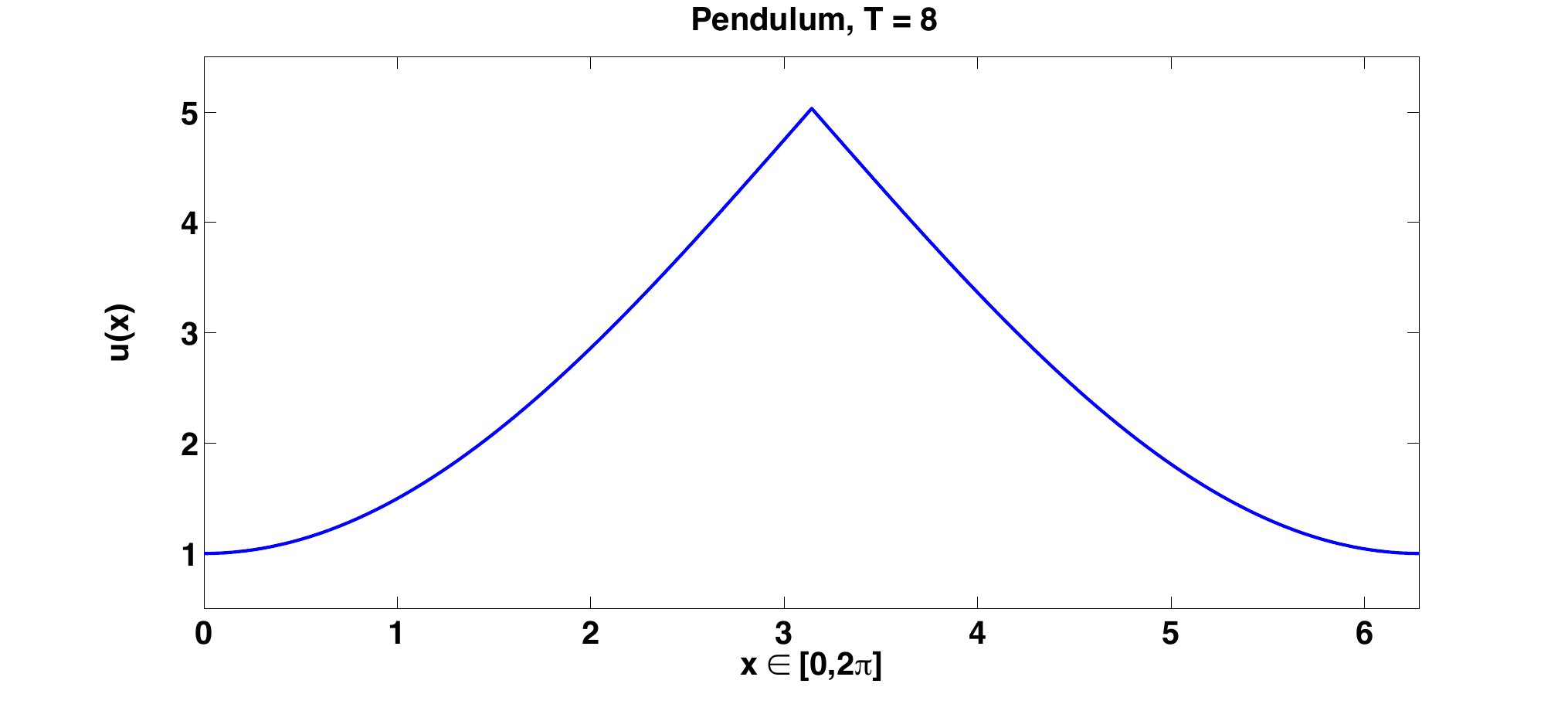}}}
  \end{center}
\caption{Solution of the Hamilton-Jacobi equation at $T = 8$ for the potential $1 - \cos(x)$ (pendulum)}
\label{figpendulum}
\end{figure}

We perform different simulations with mesh size of the form $\pi/K$ with $K = 2^{n}$ for $n = 1$ to $n = 13$ corresponding to $2K$ grid points between $0$ and $\pi$ (from $4$ to $16384$ grid points). The time discretization parameter $\tau$ is taken to be $\sqrt{h}$ so that we expect a global order of convergence $\mathcal{O}(\sqrt{h})$. 

The result is plot on figure \ref{figerr1} where the error is the relative error in $L^\infty$ norm between the solution obtained with the algorithm described above at time $T = 8$ and the solution computed using the fifth-order WENO algorithm   (WENO5, see \cite{jiangshu}) with $8192$ grid points. Note that in this first simulation we  take the regularisation parameter $\eta = 0$. We observe the expected convergence rate.

\begin{figure}[ht]
\begin{center}
\rotatebox{0}{\resizebox{!}{0.40\linewidth}{%
   \includegraphics{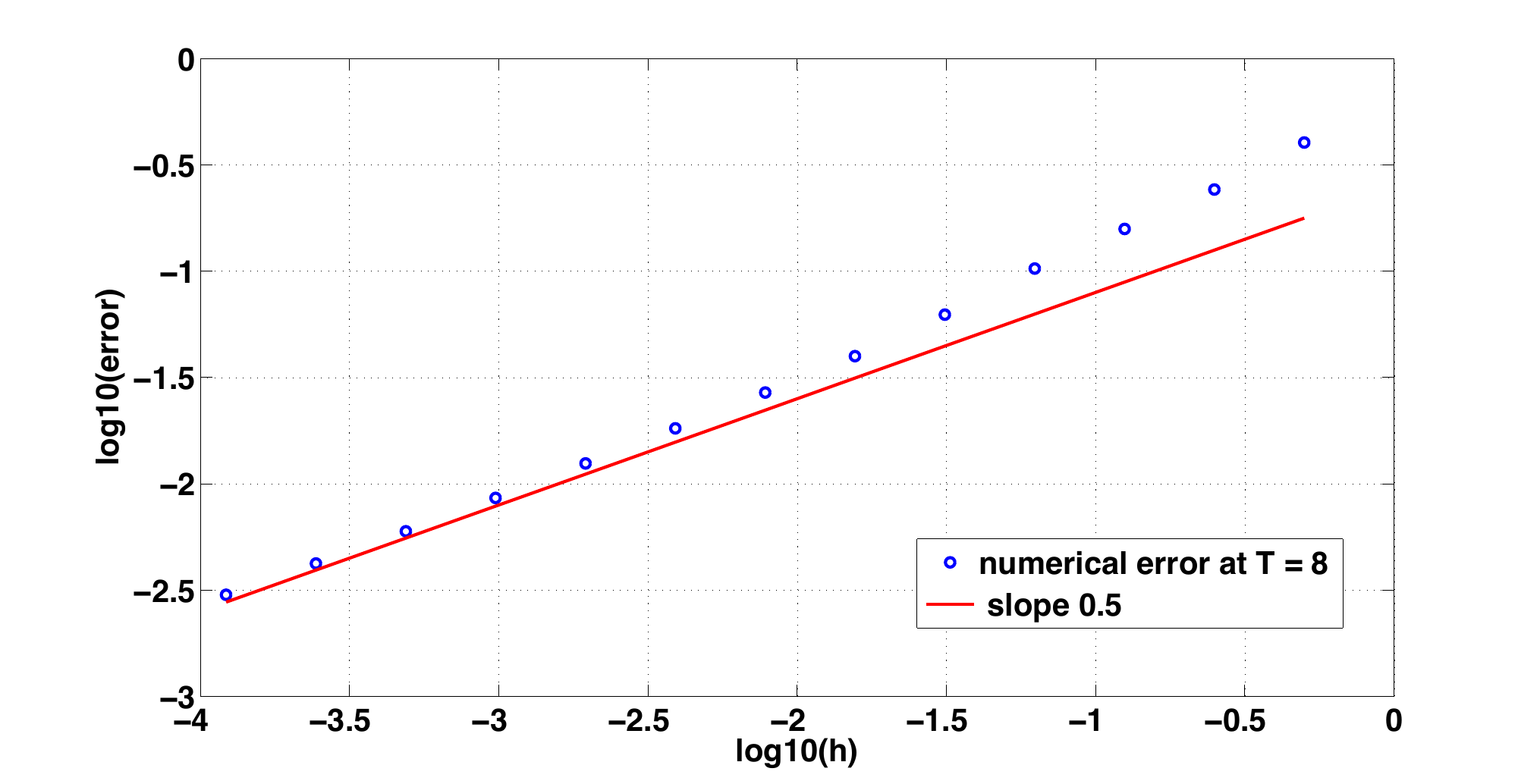}}}
  \end{center}
\caption{Error versus mesh size $ h$ with $\tau = \sqrt{h}$. Regularization parameter $\eta = 0$ (pendulum)}
\label{figerr1}
\end{figure}

In a second step we perform the same simulation, but with $\eta = h$, which does not affect the convergence rate, but allows to go up to  $2K = 2^{18} = 262144$ grid points for a few minutes of CPU time\footnote{The CPU time required to obtain the solution with $2^{18}$ grid points is about 19mn, using MATLAB on a Mac power book 2,3 GHz Intel Core i7}

\begin{figure}[ht]
\begin{center}
\rotatebox{0}{\resizebox{!}{0.40\linewidth}{%
   \includegraphics{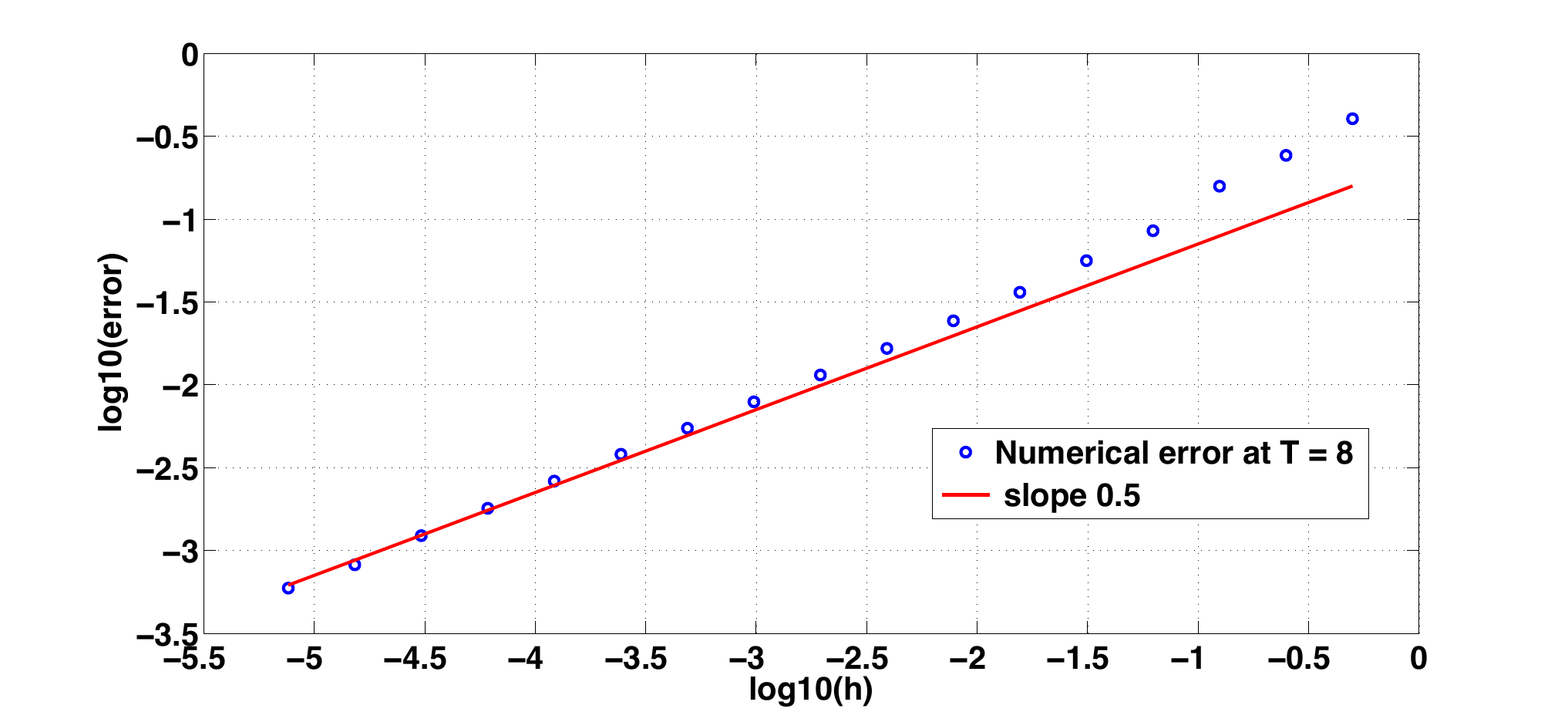}}}
  \end{center}
\caption{Error versus mesh size $ h$ with $\tau = \sqrt{h}$. Regularization parameter $\eta = h$ (pendulum) }
\label{figerr2}
\end{figure}

Using the same reference solution at $T = 8$, we perform several simulations with different values of the regularization parameter. In Figure \ref{etaerror}, we plot the evolution of the error with respect to $\eta$ and for different values of $h$. 

\begin{figure}[ht]
\begin{center}
\rotatebox{0}{\resizebox{!}{0.40\linewidth}{%
   \includegraphics{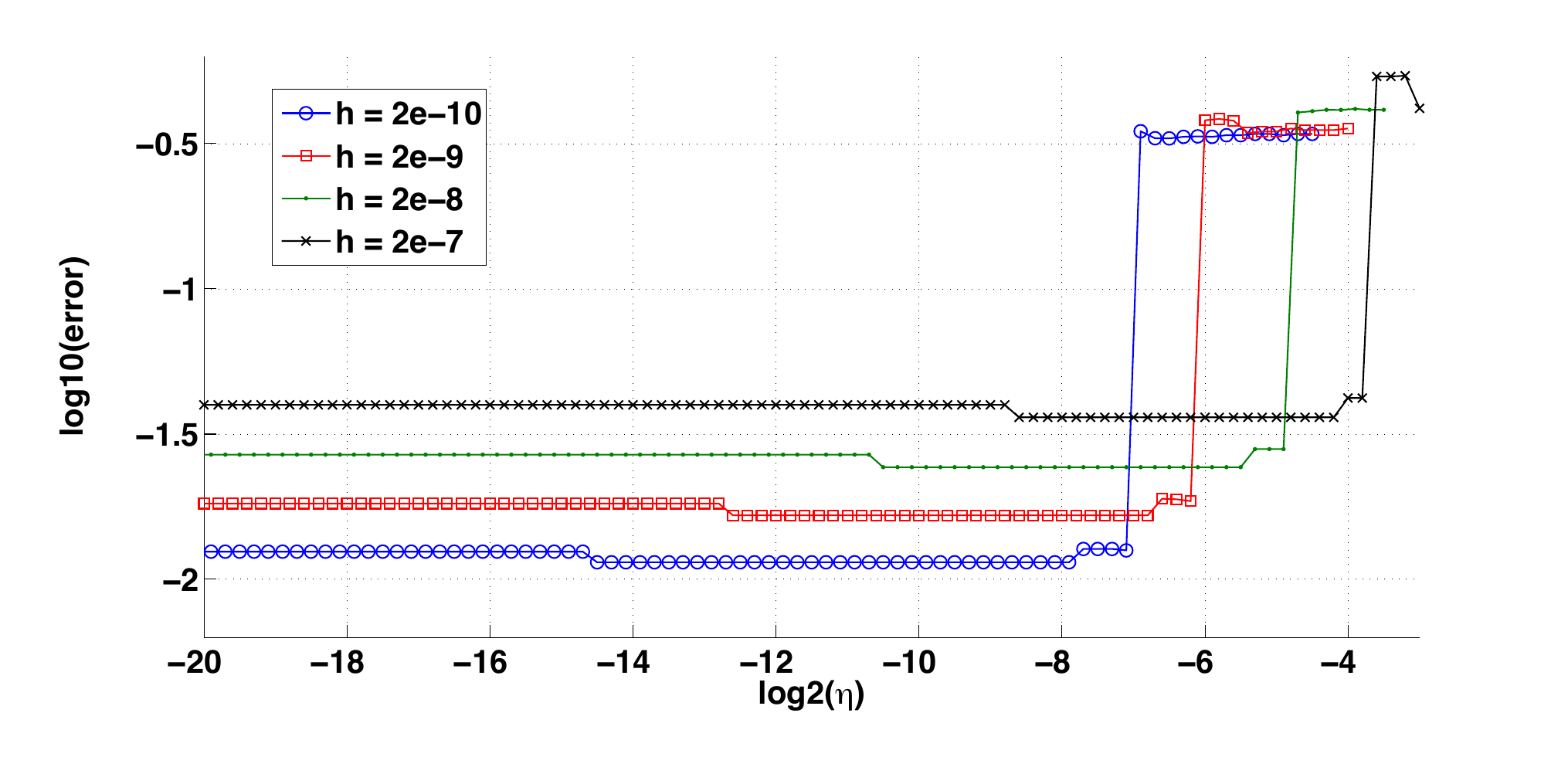}}}
  \end{center}
\caption{Error versus regularization parameter $\eta$ for different values of $h$ (pendulum) }
\label{etaerror}
\end{figure}

In Figure \ref{etaCPU} we plot the evolution of the CPU time with respect to $\eta$, and in Figure \ref{CPUerror}, we plot the error versus the CPU time, for different values of $h$ and different values of $\eta$.

\begin{figure}[ht]
\begin{center}
\rotatebox{0}{\resizebox{!}{0.40\linewidth}{%
   \includegraphics{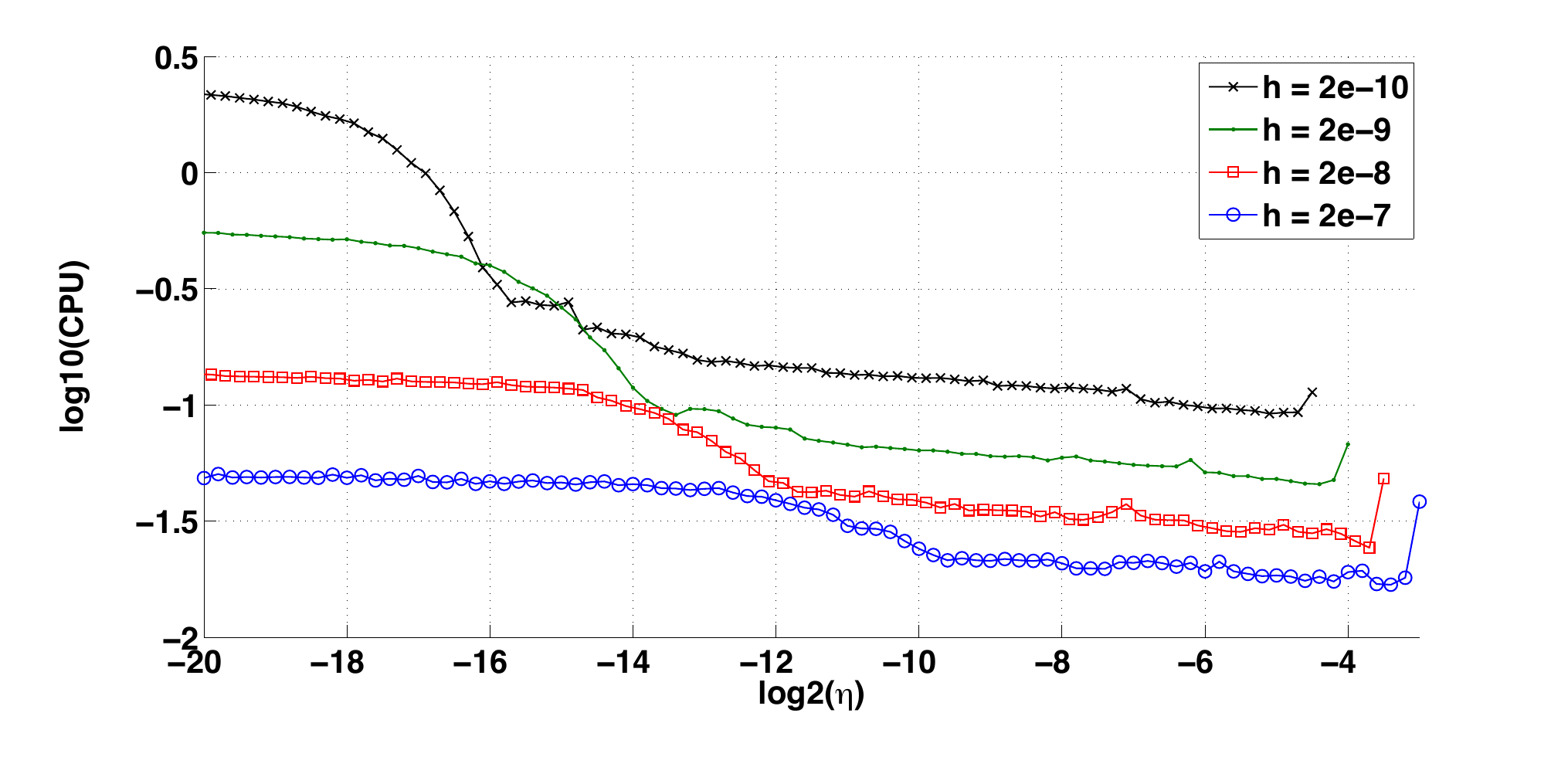}}}
  \end{center}
\caption{CPU time versus regularization parameter $\eta$ for different values of $h$ (pendulum). }
\label{etaCPU}
\end{figure}

\begin{figure}[ht]
\begin{center}
\rotatebox{0}{\resizebox{!}{0.40\linewidth}{%
   \includegraphics{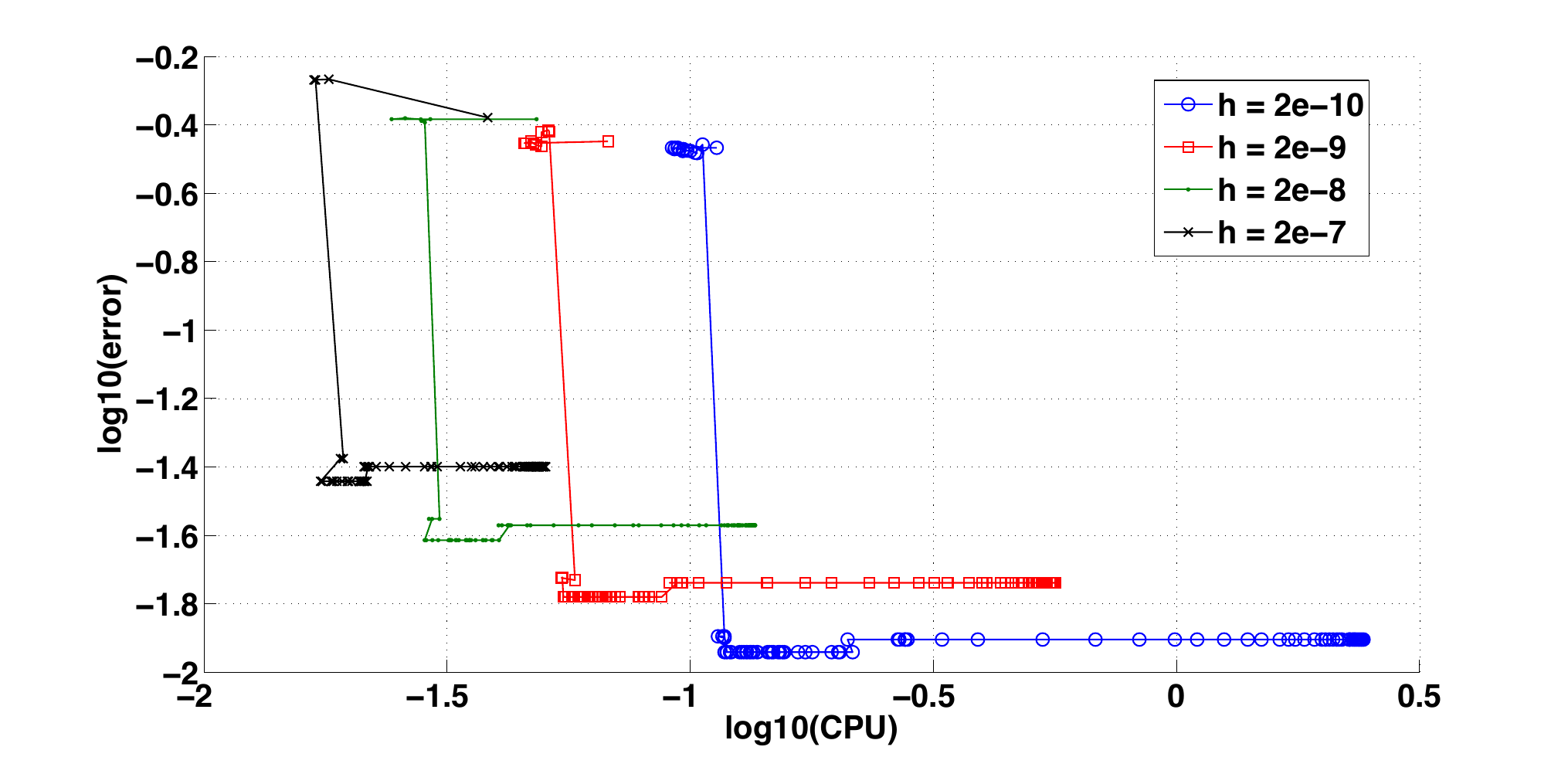}}}
  \end{center}
\caption{Error versus CPU time for different values of $h$ (pendulum). }
\label{CPUerror}
\end{figure}

As a conclusion of these simulations, it seems that the performances of the algorithm seem to be optimized for $\eta$ depending linearly of $h$ in the case of the pendulum. 

\subsection{Time dependent potential}

We now take the $P = 1$ and the time dependent potential $V(t,x) = \cos(2x) \sin(t)$. We still consider the initial eigenvalue $u_0(x) = \cos(x)$. In this case,  a theorem of Bernard and Roquejoffre (\cite{BJ})  states that the solution converges towards a function that is periodic in time (of period possibly greater than $2\pi$) which is a priori not constant, in contrast with the previous case. The shape of the solution is depicted in Figure \ref{figpendt}. 

\begin{figure}[ht]
\begin{center}
\rotatebox{0}{\resizebox{!}{0.40\linewidth}{%
   \includegraphics{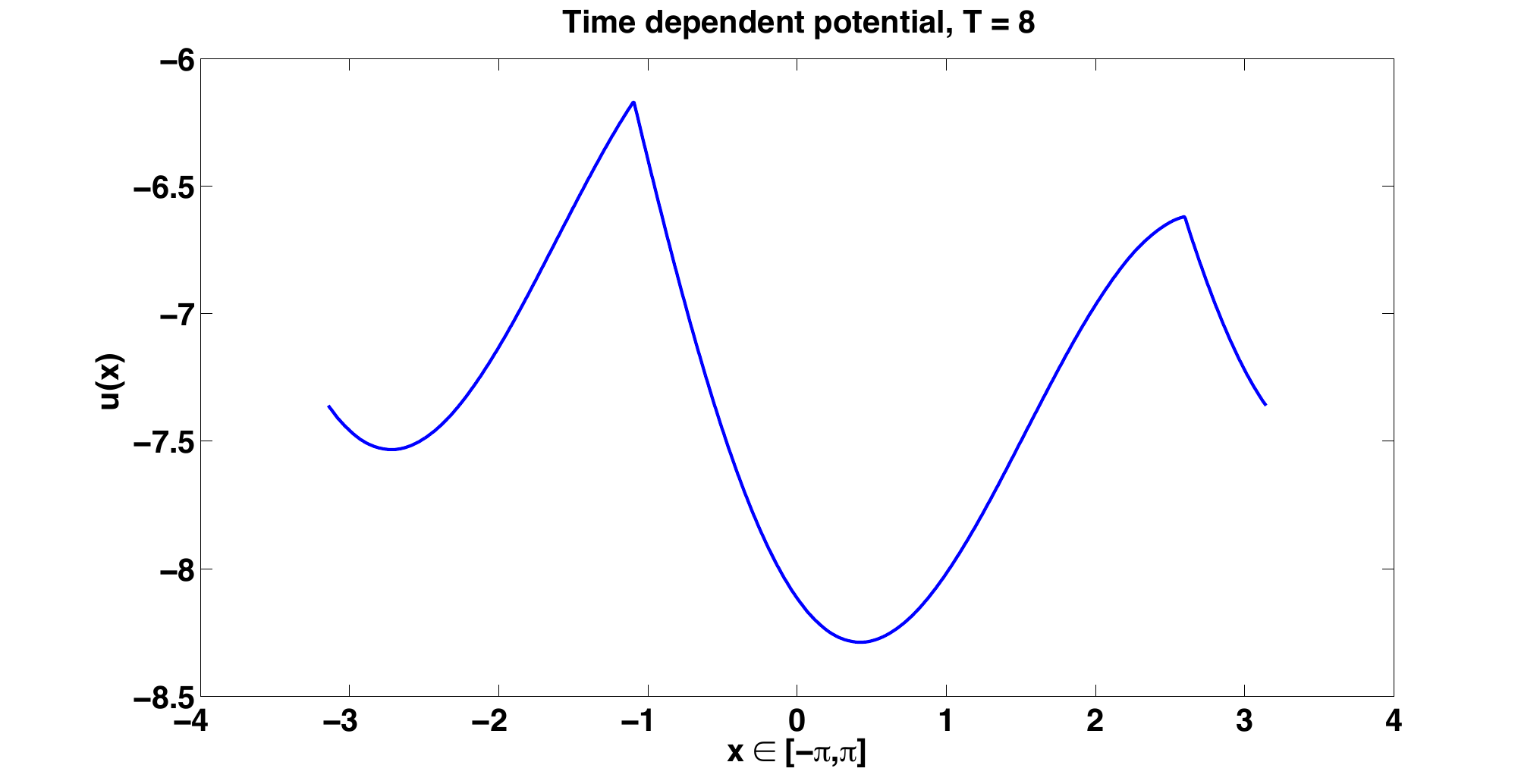}}}
  \end{center}
\caption{Solution of the Hamilton-Jacobi equation at $T = 8$ for the potential $\cos(2x)\sin(t)$ (time dependent potential)}
\label{figpendt}
\end{figure}

In Figures \ref{figerr1t} we illustrate the convergence result obtained above in the case $\tau = \sqrt{h}$ and observe the predicted rate of convergence $h^{1/2}$. Again the exact solution is computed at $T = 8$ with the WENO5 algorithm. 

\begin{figure}[ht]
\begin{center}
\rotatebox{0}{\resizebox{!}{0.40\linewidth}{%
   \includegraphics{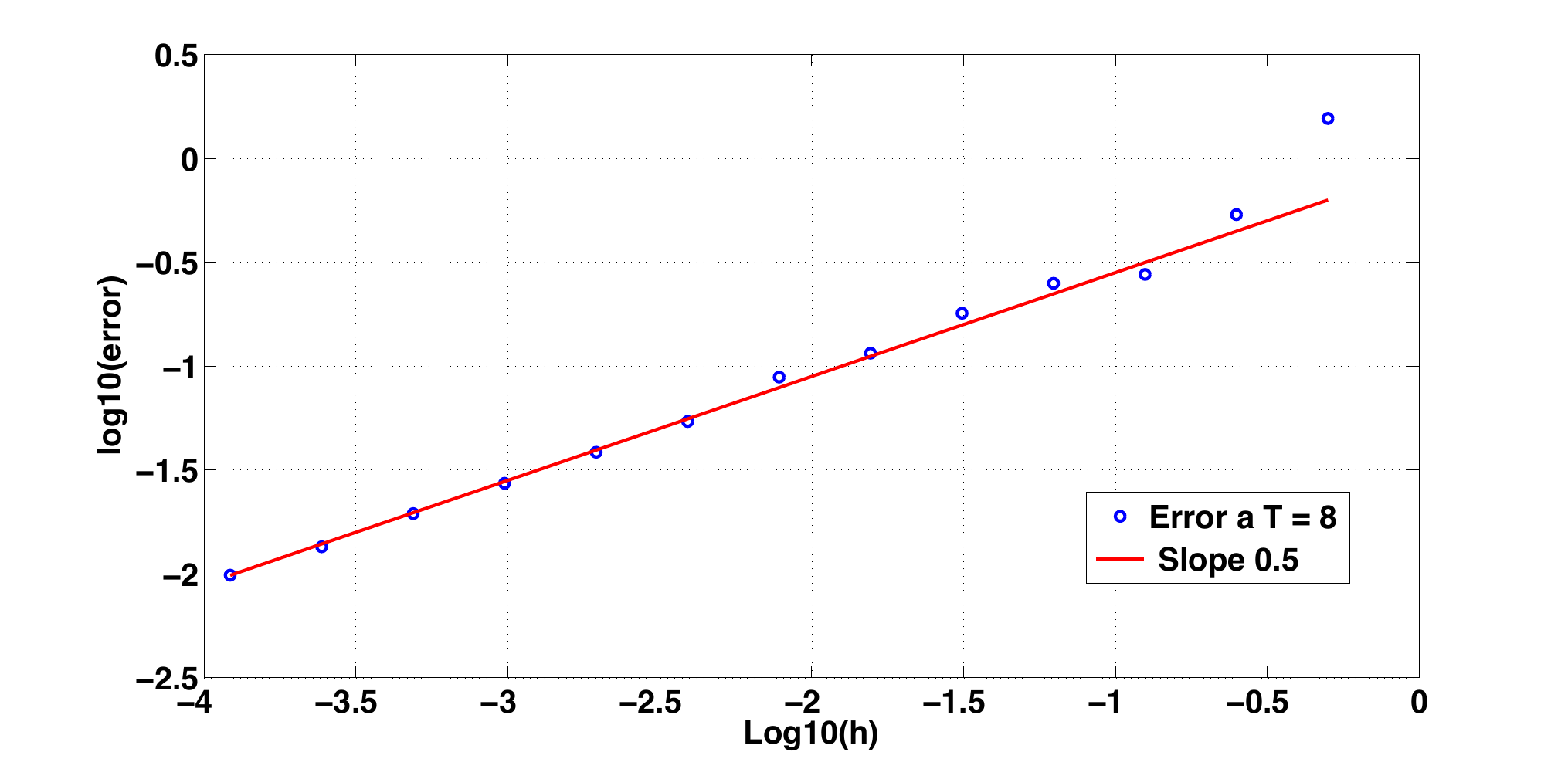}}}
  \end{center}
\caption{Error versus mesh size $ h$ with $\tau = \sqrt{h}$. Regularization parameter $\eta = 0$ (time dependent potential)}
\label{figerr1t}
\end{figure}

In figures \ref{etaerrort}, \ref{etaCPUt} and \ref{CPUerrort} we study the effect of the regularization parameter $\eta$ with the same data as in the previous case. We see that the same conclusion can be drawn. 
\begin{figure}[ht]
\begin{center}
\rotatebox{0}{\resizebox{!}{0.40\linewidth}{%
   \includegraphics{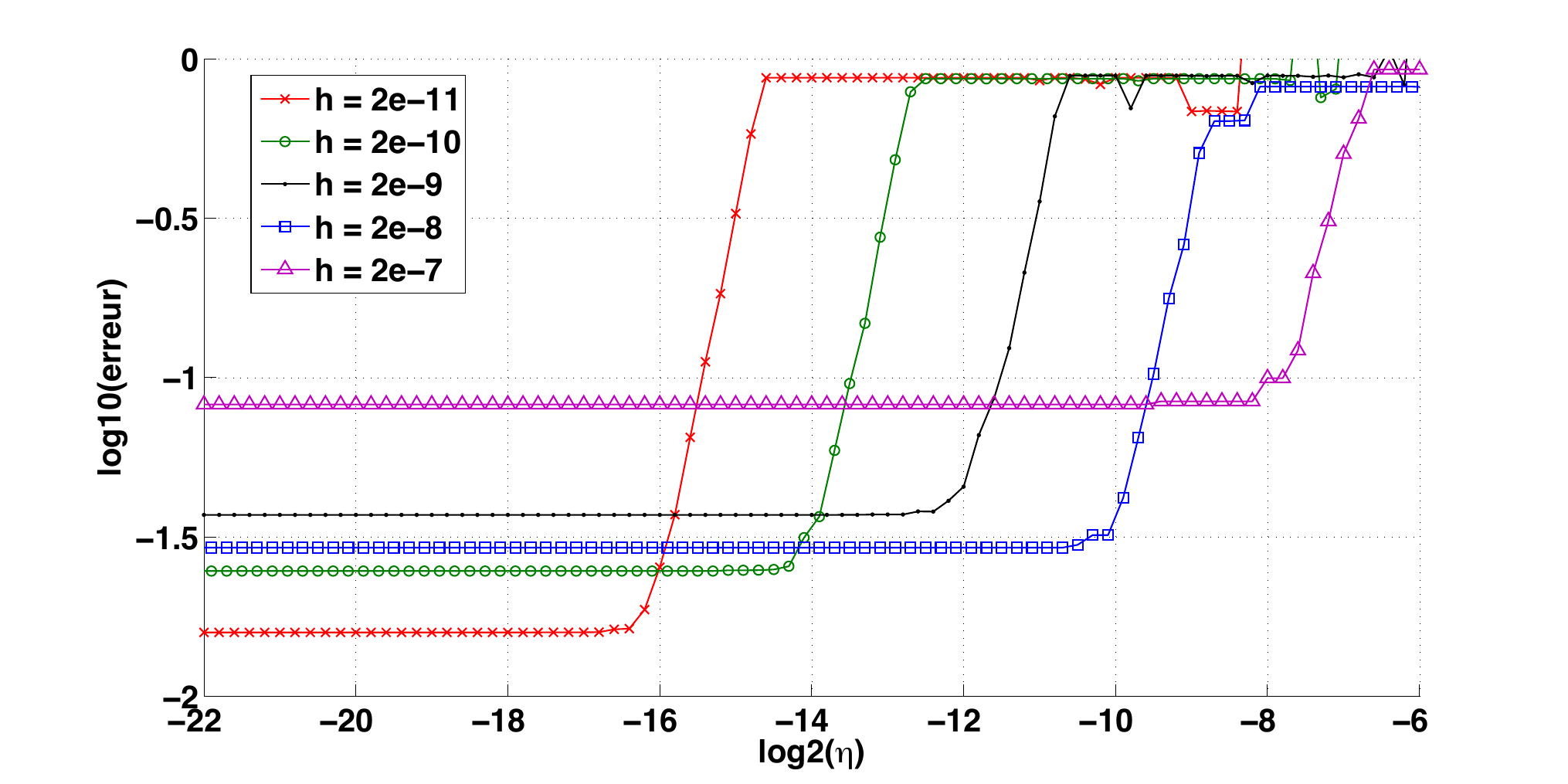}}}
  \end{center}
\caption{Error versus regularization parameter $\eta$ for different values of $h$ (time dependent potential) }
\label{etaerrort}
\end{figure}

\begin{figure}[ht]
\begin{center}
\rotatebox{0}{\resizebox{!}{0.40\linewidth}{%
   \includegraphics{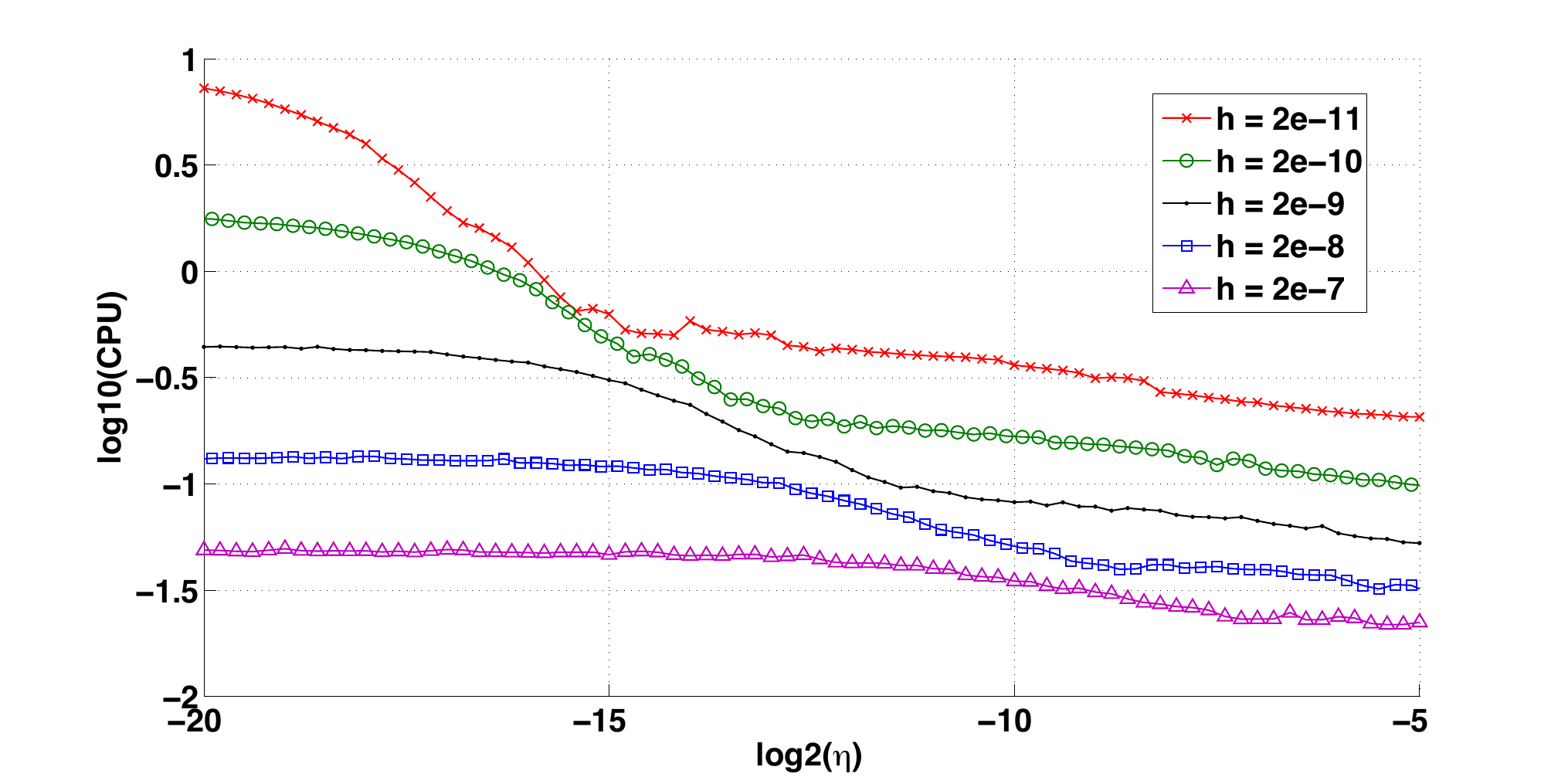}}}
  \end{center}
\caption{CPU time versus regularization parameter $\eta$ for different values of $h$ (time dependent potential) }
\label{etaCPUt}
\end{figure}

\begin{figure}[ht]
\begin{center}
\rotatebox{0}{\resizebox{!}{0.40\linewidth}{%
   \includegraphics{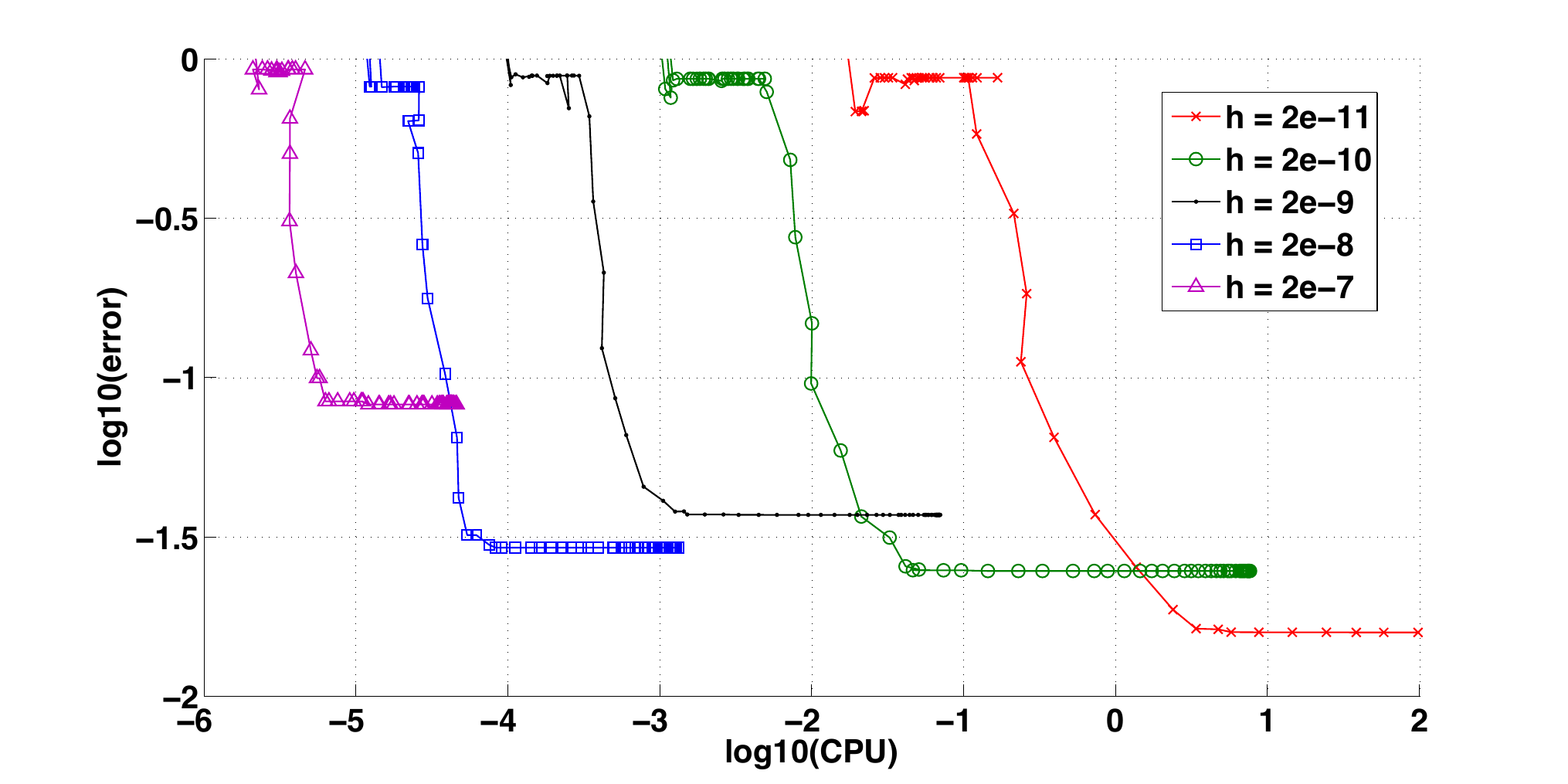}}}
  \end{center}
\caption{Error versus CPU time for different values of $h$ (time dependent potential) }
\label{CPUerrort}
\end{figure}

\appendix 
\section{Appendix: Proof of the a priori compactness Proposition \ref{lm2}}

We start with a lemma.

\begin{lm}
Assume that the hypothesis \textbf{(i)} and \textbf{(iii)} are satisfied. Recall that $L(t,x,v) = K^*(v) - V(t,x)$. 
For any $\Gamma> 1$, there exists a constant $\Gamma'$ such that for any $x,y\in \R^n$ and $T> 0$ and $t>1$, if $|x-y|/t<\Gamma$ and $\gamma$ that minimizes the quantity
$$\inf_{\substack{\gamma(0)=x \\ \gamma(t)=y          }} \int_0^t L\big(T+s,\gamma(s),\dot \gamma(s) \big) \d s,$$
then
$$\forall \, 0\leqslant a\leqslant a+1 \leqslant t, \quad |\gamma(a)-\gamma(a+1)|<\Gamma'.$$

\end{lm}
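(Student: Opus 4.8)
The plan is to first convert the hypothesis on the average velocity into a global upper bound on the action, and then to upgrade this into a control of the displacement over each unit-time interval that is uniform in $t$ and $T$. I would begin by comparing $\gamma$ with the affine curve $s\mapsto x+\tfrac{s}{t}(y-x)$, whose constant velocity has norm $|x-y|/t<\Gamma$. Since $V$ is bounded by $B$ through \eqref{eq:hyp1} and $K^*$ is continuous, minimality gives
\[
\int_0^t L\big(T+s,\gamma(s),\dot\gamma(s)\big)\,\d s \;\leq\; t\,A_1(\Gamma),\qquad A_1(\Gamma):=\sup_{|v|\leq\Gamma}|K^*(v)|+B .
\]
Conversely $L=K^*-V$ is bounded below, say $L\geq -b_0$, so the action of $\gamma$ over any subinterval $I$ is at least $-b_0|I|$.

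Next, fix $a$ with $0\leq a\leq a+1\leq t$ and write $\ell=|\gamma(a)-\gamma(a+1)|$. Applying Jensen's inequality to the convex function $K^*$ on the unit interval $[a,a+1]$ yields $\int_a^{a+1}K^*(\dot\gamma)\,\d s\geq K^*\big(\gamma(a+1)-\gamma(a)\big)$, and the uniform strict convexity \eqref{eq:conv} forces a quadratic minorization $K^*(v)\geq \tfrac{c}{2}|v|^2-c_1|v|-c_2$. Hence the action of $\gamma$ over $[a,a+1]$ is at least $\tfrac{c}{2}\ell^2-c_1\ell-(c_2+B)$, a quantity that tends to $+\infty$ as $\ell\to\infty$. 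The whole point is then to contradict this growth by minimality as soon as $\ell$ is too large.

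The naive way to conclude — bounding every other unit interval from below by $-b_0$ and inserting this into the global action bound — only yields $\tfrac{c}{2}\ell^2\lesssim t$, that is $\ell=O(\sqrt t)$, which is insufficient for an estimate independent of $t$. The same defect affects the energy route, whose drift is controlled only by $|\dot E|\leq B$, hence $O(t)$ over $[0,t]$. This is the main obstacle, and it is precisely where the argument of Mather \cite{Ma2} and the periodicity hypothesis \textbf{(iii)} enter. The idea I would pursue is to build a strictly cheaper competitor that redistributes the fast motion over a longer window: replacing $\gamma$ on $[a-k,a+1+k]\cap[0,t]$ by the affine segment joining its endpoints produces a curve whose velocity is of order (displacement over the window)$/k$, hence whose cost is only of order $\tfrac{1}{k}(\text{displacement})^2+k\,b_0$. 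Balancing this against the quadratic gain $\tfrac{c}{2}\ell^2$ from the fast interval should bound $\ell$ by a constant depending only on $\Gamma$ and the data. The delicate point, which I expect to be the crux of the bookkeeping, is that the displacement over the window a priori involves the very quantities one is bounding; the natural way to handle this is a dichotomy: either the maximal unit displacement is comparable to the average, hence $\lesssim\Gamma$ and one concludes directly, or it is anomalously large compared with its neighbours, in which case the longer-window affine competitor is genuinely cheaper. The translation invariance granted by the $\Z\times\Z^n$-periodicity of $V$ is what keeps these comparison curves admissible at controlled cost, and is the reason \textbf{(iii)} cannot be dropped.

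Once the uniform bound $\ell<\Gamma'$ is in hand, Proposition \ref{lm2} follows at once: given $s\in[a,b]$ with $b-a\geq1$, choose a unit subinterval $[a',a'+1]\subseteq[a,b]$ containing $s$; the restriction of $\gamma$ to it is again an action minimizer with endpoints at distance $<\Gamma'$, so Proposition \ref{lm1} applied with $R=\Gamma'$ and $T=1$ bounds $|\dot\gamma(s)|$ by $M(\Gamma',1)$, a constant depending only on $\Gamma$ and the data, as required.
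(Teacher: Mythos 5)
Your preliminary reductions are correct and coincide with the paper's: comparison with the affine curve gives the global action bound $t\,A_1(\Gamma)$, convexity gives the quadratic lower bound $\tfrac{c}{2}\ell^2-c_1\ell-c_2$ for the cost of a unit interval with displacement $\ell$, and you rightly observe that the naive global comparison only yields $\ell=O(\sqrt{t})$. But the competitor you propose to overcome this --- replacing $\gamma$ on an enlarged window $[a-k,a+1+k]$ by the affine segment joining its endpoints --- does not close the argument. First, your cost estimate $\tfrac1k(\mathrm{displacement})^2+k\,b_0$ presupposes a quadratic \emph{upper} bound $K^*(v)\lesssim |v|^2$; hypothesis \textbf{(i)} only gives a lower bound \eqref{eq:conv}, so the affine cost can only be written as $|W|\cdot C^+\big(d_W/|W|\big)$ for an unknown increasing function $C^+$, as in \eqref{mat1}. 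Second, and more fundamentally, the window displacement $d_W$ is not controlled by $\ell$: if the minimizer moves consistently at large speed, or accumulates a large action, on a long stretch surrounding $[a,a+1]$, then $d_W$ is of order $|W|$ times that speed, the affine competitor must itself cross the window at a comparable speed, and the comparison ``affine cost $<$ old action'' fails by constant factors (it must fail in general: for $L=\tfrac12|v|^2$ the affine segments \emph{are} the minimizers, and a bounded potential only perturbs this by $O(|W|)$). Your dichotomy does not repair this: in the bad case the fast interval is not anomalous relative to its neighbours, and the only available compensation --- the curve must be slow \emph{somewhere}, since the global average velocity is $<\Gamma$ --- may sit at time-distance of order $t$ from $[a,a+1]$, out of reach of any window of bounded size; enlarging the window to reach it puts you back in the $O(\sqrt t)$ regime.

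The missing idea, which is the actual content of the paper's proof, is to transfer time from the slow region to the fast one \emph{without touching anything in between}. The paper picks $b\in[a,a+1]$ with $|\gamma(a)-\gamma(b)|=\Gamma'''$, locates (via the global action bound) a nearby interval $[c,d]$ of length $2N$, with $N=\Gamma'''/\Gamma''\in\N^*$, on which the average velocity of $\gamma$ is at most $\Gamma''$, and builds $\delta$ equal to $\gamma$ outside $[a,d]$, equal to a minimizer from $\gamma(a)$ to $\gamma(b)$ on the stretched interval $[a,b+N]$, equal to the time-translate $\gamma(\cdot-N)$ on $[b+N,c+N]$, and equal to a minimizer from $\gamma(c)$ to $\gamma(d)$ on the compressed interval $[c+N,d]$. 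The whole point is the middle piece: because $L$ is $1$-periodic in time and $N$ is an integer, the translated piece has \emph{exactly} the same action as the original, so whatever (possibly huge) action $\gamma$ accumulates between $b$ and $c$ cancels in the comparison; only the fast piece (slowed to speed $\approx\Gamma''$) and the moderate piece (sped up to $\approx 2\Gamma''$) change cost, and the cascade of constants $\Gamma''$, $\Gamma'''$, $\Gamma'$ chosen through $C^\pm$ makes the difference strictly positive, contradicting minimality. This also shows that your remark about hypothesis \textbf{(iii)} misses the mechanism: in your construction periodicity plays no role at all (affine competitors are always admissible, and their cost uses only the bound \eqref{eq:hyp1}); periodicity is needed precisely for the action-preserving \emph{integer time-translation}, which your competitor never performs. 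Finally, your last paragraph, deducing Proposition \ref{lm2} from the lemma via Proposition \ref{lm1} with $R=\Gamma'$ and $T=1$, is correct (and a reasonable variant of the paper's own deduction), but it is not the statement at issue.
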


\begin{proof}
Without loss of generality, we will assume that $L$ is positive. Indeed, the potential $V$ is bounded, and adding a constant doesn't change the minimizers. In this case, and under the hypothesis \textbf{\em (i)}, there exists a nonnegative, increasing function $\alpha : [0,\infty) \to \R$  which verifies $\lim\limits_{t\to \infty}\alpha(t)=\infty$,  such that 
$$\forall t,x,v,\quad L(t,x,v)\geqslant \alpha (|v|)|v|.$$

The idea of the proof is that if $\gamma$ at some point has a great velocity, then it must be slow later. It is then better to ``slow down" the fast part and accelerate the ``slow" one.

First, we set some notations. For all   $(x,y,t,T) \in \R^n \times \R^n \times \R_+ \times \R$, let
$$
A_T^t(x,y)=\inf_{\substack{\gamma(0)=x \\ \gamma(t)=y          }} \int_0^t L\big(T+s,\gamma(s),\dot \gamma(s) \big) \d s,
$$
be the Lagrangian action.

We start by showing, that the superlinearity of $L$ implies a superlinearity of $A_T^t$. As already done a few times, we may bound the action by comparing with a straight line, using that $L$ is uniformly bounded on sets of the form $\R \times \R^n \times B(0,R)$, where $B(0,R)$ is the ball of radius $R > 0$ in $\R^n$:

\begin{eqnarray}
\label{eq:+}
A_T^t(x,y)&\leqslant& \int_0^t L\big(T+s,\frac{(t-s)x+sy}{t},\frac{y-x}{t} \big) \d s \nonumber\\
&\leqslant& t C^+\big(\frac{|x-y|}{t}\big) \max\Big(\frac{|x-y|}{t},1\Big), 
\end{eqnarray}
for some increasing function $z \mapsto C^+\big(z\big)$ defined on $\R^+$. Let $\gamma$ realizing the infimum, and set 
$$
\mathcal E = \Big\{s \in [0,t] \quad\mbox{such that}\quad | \dot\gamma(s) |\geqslant \frac{|x-y|}{2t}\Big\}. 
$$
Then we get, using that $L>0$:
\begin{multline}
A_T^t(x,y)=\int_0^t L\big(T+s,\gamma(s),\dot \gamma(s) \big) \d s  \\
\geqslant \int_{\mathcal E  }L\big(T+s,\gamma(s),\dot \gamma(s) \big) \d s   \\
\geqslant \alpha\big(\frac{|x-y|}{2t} \big) \int_{\mathcal E  }|\dot \gamma(s)| \d s \geqslant \alpha\big(\frac{|x-y|}{2t} \big)\frac{|x-y|}{2}.\label{eq:-}
\end{multline}
The last inequality comes from the fact that when $\gamma$ is going at speed less than     $|x-y|/2t$ for time $t$, it cannot travel more than $|x-y|/2$, therefore the integral is greater than $|x-y|/2$ by the triangular inequality.
In other terms, equations \eqref{eq:+} and \eqref{eq:-} state that there are two positive functions $C^+$ and $C^-$ which can be easily made increasing, such that
\begin{equation}\label{mat1}
 C^-\big(\frac{|x-y|}{t}\big) \frac{|x-y|}{t}\leqslant \frac{A_T^t(x,y)}{t} \leqslant     C^+\big(\frac{|x-y|}{t}\big) \max\Big(\frac{|x-y|}{t},1\Big).
\end{equation}
Moreover, thanks to the superlinearity of $L$ those functions are coercive.

Now consider 
\begin{itemize}
\item $\Gamma''>\Gamma$ such that $20C^+(\Gamma)<C^-(\Gamma'')$, 
\item $\Gamma'''>\Gamma''$ such that $\Gamma'''/\Gamma''\in \N^*$ and  $30C^+(20\Gamma'')<C^-(\Gamma''')$, 
\item and finally $\Gamma'>\Gamma'''$ such that $40\Gamma'''/\Gamma''<C^-(\Gamma')  / C^+(\Gamma) $. 
\end{itemize}Let us verify that $\Gamma'$ satisfies the requirements of our lemma. 

Assume by contradiction that for some $x,y\in \R^n$, $t,T\in \R_+$ such that $|x-y| < t\Gamma$ and $\gamma$ realizing the action $A_T^t(x,y)$, there is an $a\in[0,t-1]$ such that $|\gamma(a)-\gamma(a+1)| \geqslant \Gamma'$.  As $\gamma $ is a minimizer, we have (using that $L>0$ and $\Gamma > 1$)
\begin{multline}
\label{eq:prout}
 t \Gamma C^+(\Gamma)\geqslant A_T^t(x,y)    \geqslant    \int_a^{a+1} L\big(T+ s ,\gamma(s),\dot \gamma(s) \big) \d s      \\
 \geqslant A_{T+a}^1\big( \gamma(a),\gamma(a+1) \big) \geqslant  \Gamma'C^-(\Gamma').
 \end{multline}
 Hence we obtain
 \begin{equation}
 \label{eq:estt}
 t \geqslant \frac{\Gamma'C^-(\Gamma')}{\Gamma C^+(\Gamma)} \geqslant \frac{40\Gamma'''}{\Gamma''}, 
 \end{equation}
using the fact that $\Gamma' > \Gamma$ and the definition of $\Gamma'$.

 We now assume that $a<t/2$, the other case may be treated similarly. Let $b\in [a,a+1]$ be the smallest such that $|\gamma(a)-\gamma(b)|=\Gamma'''$, and consider the sequence 
 $$
 c_i= b+2i \frac{\Gamma'''}{\Gamma''}, \quad i \in \{0,\cdots,k\},
 $$ 
 where $k$ is greatest possible integer such that $c_k\leqslant t$. Note that using \eqref{eq:estt} and $a  \leqslant t/2$, we have $k\geqslant   9$, and that for $i \in \{0,\cdots,k\}$, we have $c_{i+1} - c_i = 2 \Gamma'''/\Gamma''$. 
 
We claim that there exists an $i_0 \in \{0,\cdots,k\}$ such that 
$$
\frac{|\gamma(c_{i_0})-\gamma(c_{i_0+1})|}{c_{i_0+1}-c_{i_0}}\leqslant \Gamma''.
$$
 Indeed, otherwise we would have, using \eqref{mat1}
 \begin{eqnarray*}
 A_T^t(x,y) &\geqslant& \sum_{i=0}^{k-1}\int_{c_{i}}^{c_{i+1}} L\big(T+ s ,\gamma(s),\dot \gamma(s) \big) \d s    \\
 &\geqslant & \sum_{i=0}^{k-1}\frac{|\gamma(c_{i_0})-\gamma(c_{i_0+1})|}{c_{i_0+1}-c_{i_0}} C^-(\Gamma'')\\
 &>&  \sum_{i=0}^{k-1}\frac{2\Gamma'''}{\Gamma''} \Gamma''C^-(\Gamma''). 
\end{eqnarray*}
By definition of $k$, we have that $(k+1)\times  2\Gamma'''/\Gamma'' \geqslant t/2 -1$ while using \eqref{eq:estt}, we have $t\geqslant 40 \Gamma'''/\Gamma'' \geqslant 40$  and $2\Gamma'''/\Gamma'' \leqslant t/18$. Hence we deduce that $k\times 2\Gamma'''/\Gamma''\geqslant t/3$. 
As $\Gamma''  \geqslant \Gamma$, the previous equation yields 
$$
A_T^t(x,y) > \frac{t}{3}\Gamma C^-(\Gamma'') \geqslant t\Gamma C^+(\Gamma), 
$$
which is absurd in view of \eqref{eq:prout}.

Now we find a contradiction by constructing a curve $\delta$ which has an action less than $\gamma$. Let $[c,d]=[c_{i_0},c_{i_0+1}]$. Recall that $ K := \Gamma'''/\Gamma''$ is an integer. We define the curve $\delta$ as follows:
\begin{itemize}
\item $\delta(s)=\gamma(s)$ if $s\in [0,a] \cup [d,t]$;
\item on $[a,b+ K ]$, $\delta$ coincides with the curve minimizing  $A_{T+a}^{b+ K -a}\big(\gamma(a),\gamma(b)\big)$;
\item on $[b+ K ,c+ K ]$, $\delta$ is the translate of $\gamma$ : $\delta(s)=\gamma(s- K )$;
\item on $[c+ K , d]$ (recall that $d=c+2 K $)  $\delta$ coincides with the curve minimizing $A_{T+c+ K }^{ K }\big(\gamma(c),\gamma(d)\big)$.
\end{itemize}
We now compute the difference of action between $\gamma$ and $\delta$, recalling that $L$ is $1$-periodic in time:
\begin{align*}
\int_0^{t} L\big(T+ s ,&\gamma(s),\dot \gamma(s) \big) \d s -\int_0^{t} L\big(T+ s ,\delta(s),\dot \delta(s) \big) \d s
\\
& = \int_a^{b} L\big(T+ s ,\gamma(s),\dot \gamma(s) \big) \d s +\int_c^{d} L\big(T+ s ,\gamma(s),\dot \gamma(s) \big) \d s\\
& \phantom{=} - \int_a^{b+K} L\big(T+ s ,\delta(s),\dot \delta(s) \big) \d s - \int_{c+K}^{d} L\big(T+ s ,\delta(s),\dot \delta(s) \big) \d s   
\\
&\geqslant \Gamma'''C^-(\Gamma''')+\frac{2\Gamma'''}{\Gamma''}\Gamma''C^-(\Gamma'')\\
&\phantom{=}  -\frac{\Gamma'''}{\Gamma''} \Gamma''C^+(\Gamma'') - \frac{\Gamma'''}{\Gamma''}(2\Gamma'')C^+(2\Gamma'') >0. 
\end{align*}
This contradicts the minimality of $\gamma$.

\end{proof}

\begin{rem}\rm
In the previous proof, we only used the fact that $L$ is periodic in time. 
In \cite{itu}, a similar result is proved when $L$ is periodic in space (instead of in time). The idea of the proof is the same except that, when constructing the curve $\delta$, instead of translating it in time (in third part of the construction), it is translated in space, while the ``fast" part of $\gamma$  between $a$ and $b$ is replaced by a geodesic (straight line) between $\gamma(a)$ and the closest  point from $\gamma(a)$ in the grid $\gamma(b)+\mathbb Z^n$.
\end{rem}

We now prove  lemma \ref{lm2}:

\begin{proof}[proof of lemma \ref{lm2}]
Recall now that $L$ is periodic both in time and in space and that its Euler--Lagrange flow is  complete.
As in the previous lemma, assume $L>0$. Let $\Gamma$ and $\Gamma'$ be as in the previous lemma, and $\gamma$ be a minimizer such that $|\gamma(0)-\gamma(t)|/t\leqslant \Gamma$. The curve $\gamma$ is then a trajectory of the Euler--Lagrange flow. Let moreover $0\leqslant a\leqslant a+1 \leqslant t$.
Finally, by superlinearity of $L$, let $A(1)$ be given by Equation \eqref{eq:suplin}, such that $L(t,x,v)\geqslant |v|-A(1)$. We therefore obtain that, with the notations used in the previous proof, 
\begin{multline*}
\int_0^1 |\dot\gamma (a+s)| \d s -A(1) \\Ê
\leqslant \int_0^1 L\big(T+a+s , \gamma(a+s), \dot \gamma (a+s) \big) \d s \leqslant  C^+(\Gamma')\Gamma'.
\end{multline*}
Therefore, there is at least one point $s_0\in [0,1]$ such that 
$$
|\dot \gamma (a+s_0)| \leqslant A(1)+C^+(\Gamma')\Gamma' :=D.
$$
By periodicity of the Lagrangian, and completeness of the Euler-Lagrange flow, there exists a constant $D'$ depending only on $D$, such that $|\dot \gamma |\leqslant D'$ on $[a+s_0-1,a+s_0+1] \cap [0,t]  \supset [a,a+1]$. Since $a$ is arbitrary, this finishes the proof.
\end{proof}

\section{Appendix: Proof of  Theorem \ref{approx2}} \label{BB}

\begin{proof}[proof of Theorem \ref{approx2}] 
 The Idea of the proof a rather common technique which consists in interchanging the minimizing paths between the continuous and the fully discrete semi--groups.

Let us denote by   $\gamma_{t} :[t,t+T]\to \R^{n}$ a minimizer of \eqref{eq:laxol}. 
Recall that the curve $\gamma_t(s)$ is $C^2$. Let us set $y := \gamma_t(t)$ and $x := \gamma_t(t + T)$. We have 
$$
T_t^Tu (x) = u(y) + \int_{t}^{t+T} L\big(s,\gamma_{t}(s),\dot{\gamma}_{t}(s)\big)\d s. 
$$
By superlinearity \eqref{eq:suplin}, this implies that 
$$
\int_{t}^{t+T} |\dot \gamma_t(s)| \d s  \leqslant T A(1) + |T_t^T u(x) - u(y)|. 
$$
Comparing with the trivial curve $\gamma \equiv x$ in the definition of the Lax--Oleinik semi--group, we have that 
\begin{equation}\label{ineg1}
T_t^Tu(x)  \leqslant \SNorm{u}{\infty} + T\big( B+K^*(0)\big)
\end{equation}
where $B$ is the constant in equation \eqref{eq:hyp1} (Hypothesis \textit{\textbf{(i)}}). Moreover, since $L$ is bounded below \big(meaning $L(t,x,v)\geqslant b$ for some constant $b$, for all $(t,x,v)$\big), clearly, the action of any curve defined for a time $T$ is greater than $Tb$ which implies immediately that
\begin{equation}\label{ineg2}
T_t^Tu(x)  \geqslant -\SNorm{u}{\infty} + Tb.
\end{equation}
 Hence, there exists a constant $B_1$ depending only on $L$ and $\SNorm{u}{\infty}$ such that 
\begin{equation}\label{pasdequoisenerver}
|x - y| =|\gamma_t(t+T) - \gamma_t(t)|  \leqslant \int_{t}^{t+T} |\dot \gamma_t(s)| \d s \leqslant B_1(1+T). 
\end{equation}
Remarking that $\gamma_t$ is also a minimizer of the action \eqref{eq:action} under the constraint $\gamma(t) = y$ and $\gamma(t+T) = x$, we can apply 
Proposition \ref{lm1} which shows that  there exists a constant $M_1=M\big(B_1(1+T),T\big)$ depending only on $T$, $L$ and $\SNorm{u}{\infty}$ such that 
\begin{equation}
\label{eq:apriori}
\forall\, s \in [t,t+T], \quad |\dot \gamma_t(s) |  \leqslant M_1. 
\end{equation}

Assume now that $N$ is an integer such that $N \tau \leqslant T$. 
For all $i = 0,\ldots,N$ we define 
$$
x_i= \e\left\lfloor \frac{1}{\e} \gamma_t(t_i)\right\rfloor
$$
where for $i = 0,\ldots,N$, $t_i = t + i \tau$ and where the function $\lfloor \cdot \rfloor$ is the floor function, coordinate by coordinate. 
With these points, we associate the continuous piecewise linear path $\lambda$ defined by

 \begin{equation}
 \label{eq:path}
\lambda(s) = x_i+(s-t_i)\frac{x_{i+1}-x_i}{\tau}, \quad \mbox{for}\quad s \in [t_{i},t_{i+1}]. 
 \end{equation}

By definition of the points $x_i$, we have 
\begin{equation}
\label{eq:sqn}
\forall\, i\in [0,N],\quad |x_i-\gamma_t(t_i)|=|\lambda(t_i)-\gamma_t(t_i)|\leqslant \e \sqrt{n}.
\end{equation}
Now, using the bound \eqref{eq:apriori}, we have for all $i = 0,\ldots,N-1$, 
$$
|x_{i+1} - x_i| \leqslant 2\e\sqrt n + \int_{t_i}^{t_{i+1}} |\dot \gamma_t(s)| \d s  \leqslant 2\e\sqrt n + \tau M_1. 
$$
But this inequality implies that for all $i = 0,\ldots,N-1$, 
$$
\forall\,s \in [t_i,t_{i+1}], \quad |\lambda(s) - x_i|  \leqslant 2\e\sqrt n + \tau M_1,
$$
while $|\gamma_t(s) - \gamma_t(t_i)|  \leqslant \tau M_1$ upon using \eqref{eq:apriori}. 
Hence we get 
\begin{equation}
\label{eq:t1}
\forall\,s \in [t_i,t_{i+1}], \quad |\lambda(s) - \gamma_t(s)|  \leqslant 3\e \sqrt{n}+ 2\tau M_1. 
\end{equation}
Moreover, we have for $s, \sigma \in [t_{i},t_{i+1}]$, 
$$
|\dot \gamma_t(\sigma) - \dot \gamma_t(s)|  \leqslant \tau C,
$$
upon using \eqref{eq:2deriv}. Hence for $s \in [t_i,t_{i+1}]$, we have
$$
|\gamma_t(t_{i+1}) - \gamma_t(t_{i}) - \tau \dot \gamma_t(s)| \leqslant  \int_{t_i}^{t_{i+1}} |\dot \gamma_t(\sigma) - \dot \gamma_t(s)| \d \sigma \leqslant \tau^2 C,
$$
and hence for all $s \in [t_i,t_{i+1}]$
$$
\left |\frac{\gamma_t(t_{i+1}) - \gamma_t(t_{i})}{\tau} - \dot \gamma_t(s)\right| \leqslant \tau C. 
$$
Using \eqref{eq:sqn}, we obtain easily that for all $i = 0,\ldots,N-1$, 
\begin{equation}
\label{eq:t2}
\forall\, s \in [t_i,t_{i+1}], \quad | \dot \lambda(s)  - \dot \gamma_t(s)|  \leqslant \tau C + \frac{2\e}\tau\sqrt{n} .
\end{equation}
Note that using \eqref{eq:anticfl} and \eqref{eq:apriori}, the previous equation implies that for all $i = 0,\ldots,N-1$, 
\begin{equation}
\label{eq:bornd}
\forall\, s \in [t_i,t_{i+1}],\quad | \dot \lambda(s)|  \leqslant M_2
\end{equation}
for some constant $M_2=\tau_0C+h_0\sqrt{n}+M_1$ independent of $\e$ and $\tau$. 

Now by definition of $\lambda$, we have 
\begin{multline}
\label{eq:crux}
\Big|\int_0^{N\tau}L\big(s,\gamma_t(s),\dot{\gamma}_t(s)\big) \d s -\int_0^{N\tau}L\big(s,\lambda(s),\dot\lambda(s)\big) \d s\Big|\\
\leqslant\sum_{i = 0}^{N-1}\int_{t_i}^{t_{i+1}}\Big|L\big(s,\gamma_t(s),\dot{\gamma}_t(s)\big) -L\big(s,\lambda(s),\dot\lambda(s)\big)\Big|\d s.
\end{multline}
Using \eqref{eq:hyp1} (coming from Hypothesis \textbf{\em (ii)}), the fact that $K^*$ is $C^2$ and the bounds \eqref{eq:apriori} and \eqref{eq:bornd}, there exists a constant $M_3$, depending on $L$, $M_1$ and $M_2$, such that 
the previous error term is bounded by 
$$
M_3\sum_{i = 0}^{N-1}\int_{t_i}^{t_{i+1}} \Big( |\gamma_t(s) - \lambda(s)| + | \dot\gamma_t(s) - \dot \lambda(s) |\Big) \d s. 
$$
Using \eqref{eq:t1} and \eqref{eq:t2}, this shows that there exists a constant $M_4$ independent on $\e$ and $\tau$, such that 
\begin{equation}
\label{eq:jkl}
\left|\int_t^{t + N\tau}L\big(s,\gamma_{t}(s),\dot{\gamma}_{t}(s)\big)\d s-\int_0^{N\tau}L\big(s,\lambda(s),\dot\lambda(s)\big) \d s\right|\leqslant M_4\big(\frac{\e}{\tau}+\tau\big),
\end{equation}
where we used the fact that $\e \leqslant  \tau_0 \e / \tau$. 

Finally, the term we wish to estimate is 
\begin{multline*}
\Big|\int_t^{t+N\tau}   L\big(s,\gamma_t(s),\dot{\gamma}_t(s)\big) \d s-\sum_{i=0}^{N-1} \kappa_{t,\e}^{\tau}(x_{i},x_{i+1})\Big|\\
\leqslant \sum_{i = 0}^{N-1}\int_{t_i}^{t_{i+1}}\Big|L\big(s,\gamma_t(s),\dot{\gamma}_t(s)\big) -L\big(s,\lambda(s),\dot\lambda(s)\big)\Big|\d s\\
+ \sum_{i = 0}^{N-1}\int_{t_i}^{t_{i+1}}\Big|L\big(s,\lambda(s),\dot\lambda(s)\big) -L\big(t_i,\lambda(t_i),\dot\lambda(t_i)\big)\Big|\d s.
\end{multline*}

To bound the second term, we observe first that for $s \in [t_i,t_{i+1}]$, the derivative $\dot\lambda(s) = (x_{i+1} - x_i)/\tau$ does not depend on $s$. Hence using \eqref{eq:hyp1} and \eqref{eq:bornd} the function 
$$
[t_i,t_{i+1}] \ni s \mapsto L\big(s,\lambda(s),\dot\lambda(s)\big)
$$
is $\mathcal{C}^1$ with uniformly bounded derivative. Thus we obtain that there exists a constant $M_5$ such that 
$$
\Big| L\big(s,\lambda(s),\dot\lambda(s)\big) -L\big(t_i,\lambda(t_i),\dot\lambda(t_i)\big)\Big|  \leqslant M_5(s - t_i). 
$$

This proves that \big(compare \eqref{eq:jkl}\big)
\begin{equation}
\label{eq:jkl2}
\left|\int_t^{t + N\tau}L\big(s,\gamma_{t}(s),\dot{\gamma}_{t}(s)\big)\d s-\sum_{i=0}^{N-1} \kappa_{t_i,\e}^{\tau}(x_i,x_{i+1})\right|\leqslant M_6\big(\frac{\e}{\tau}+\tau\big),
\end{equation}
for some constant $M_6$ independent of $\e$ and $\tau$. 

Now by definition of $\Tc^N$, we have using \eqref{eq:sqn} and the Lipschitz nature of $u$, 
\begin{eqnarray}
\label{eq:klm}
\Tc^N (u|_{G_{\e}})(x)  &\leqslant& u(x_0) + \sum_{i = 0}^{N-1} \kappa_{t_i,\e}^{\tau}(x_{i},x_{i+1})\nonumber\\
&\leqslant& (T^{N\tau}_t u)|_{G_{\e}}(x)  + \big|u(x_0) - u\big(\gamma_t(0)\big)\big| + M_4\big(\frac{\e}{\tau}+\tau\big)\nonumber\\
&\leqslant& (T^{N\tau}_t u)|_{G_{\e}}(x) + M\big(\frac{\e}{\tau}+\tau\big)
\end{eqnarray}
for some constant $M$ independent on $\e$ and $\tau$.  This proves a first inequality in the estimate \eqref{eq:conver} with the notations of the Theorem. 

To prove the reverse inequality, let us fix $x \in G_{\e}$. We consider a sequence $y_i$, $i = 0,\ldots, N$ with $y_N = x$ and 
\begin{equation}
\label{eq:raslebol}
\Tc^N (u|_{G_{\e}})(x) = u(y_0) + \sum_{i = 0}^{N-1} \kappa_{t_i,\e}^{\tau}(y_{i},y_{i+1}), 
\end{equation}
and we define the curve 
$$
\eta(s) = y_i+(s-t_i)\frac{y_{i+1}-y_i}{\tau}, \quad \mbox{for}\quad s \in [t_{i},t_{i+1}].
$$
Note that in a similar manner to what we did to prove the inequalities \ref{ineg1} and \ref{ineg2},  using the fact that $u$ is bounded, and comparing with the trivial sequence made of a constant point (with $\eqref{eq:hyp1}$) on the one hand, and the fact that $L$, hence the $\kappa_{t_i,\e}^{\tau}$ are bounded below on the second hand  show that there exists a constant $D_1$ such that 
$$
\SNorm{\Tc^N (u|_{G_{\e}})}{\infty}  \leqslant D_1. 
$$
By superlinearity of $L$ (and of the $\kappa_{t_i,\e}^{\tau}$)  and using again the fact that $u$ is bounded, we thus see, as in \ref{pasdequoisenerver} that there exists a constant $D_2$ such that for all $i = 0,\ldots,N-1$, 
$$
\left|\frac{y_{i+1} - y_{i}}{\tau} \right|  \leqslant D_2, 
$$
which in turn implies that 
$$
\forall\, s \in [t_i,t_{i+1}], \quad |\eta(s) - \eta(t_i)|  \leqslant \tau D_2.
$$
 As the derivative of $\eta(s)$ with respect to $s$ is uniformly bounded by $D_2$ and  constant on the time intervals $[t_i,t_{i+1}]$, and as $L$ is $\mathcal{C}^1$ with uniformly bounded derivative on $\R \times \R^n\times B(0,D_2)$, we obtain 
\begin{equation}
\label{eq:marre}
\left| \sum_{i = 0}^{N-1} \kappa_{t_i,\e}^{\tau}(y_{i+1},y_i) - \int_{0}^{N\tau} L\big(s,\eta(s),\dot \eta(s)\big) \d s \right|Ê \leqslant \tau D_3
\end{equation}
for some constant $D_3$. Using the definition of the exact semi--group, we thus have  
\begin{eqnarray*}
(T^{N\tau}_t u)|_{G_{\e}}(x) &\leqslant& u\big(\eta(t_N)\big) + \int_{0}^{N\tau} L\big(s,\eta(s), \dot \eta(s) \big) \d s \\
&\leqslant& \Tc^N (u|_{G_{\e}})(x) + \tau D_3
\end{eqnarray*}
upon using \eqref{eq:raslebol} and \eqref{eq:marre}. This proves the result. 
\end{proof}

\begin{proof}[proof of Theorem \ref{approx3}]
In the proof of  Theorem \ref{approx2}, equation \eqref{pasdequoisenerver} then gives using Proposition \ref{lm2} (with $T  \geqslant T_0 > 1$) that the constant $M_1$ defined in \eqref{eq:apriori} does not depend on $T = N\tau$ and depends in fact only on $T_0$. It then follows that $M_2$ and $M_3$ also are independent  of $T=N\tau$, while $M_4$ is proportional to the time of integration, that is $N\tau$.
 The rest of the proof can then be carried on giving the result.
\end{proof}

\bibliography{xou}
\bibliographystyle{alpha}
\end{document}